\newtheorem{theorem}{Theorem}[section]
\newtheorem{remark}{Remark}[section]
\newtheorem{proposition}{Proposition}[section]
\newtheorem{lemma}{Lemma}[section]
\begin{document}

\author{D.-A.\ Deckert}
\email{deckert@math.ucdavis.edu}
\address{Mathematisches Institut der LMU, Theresienstr. 39, 80333
M\"unchen, Germany.}

\author{G.\ Hinrichs}
\email{hinrichs@math.lmu.de}
\address{Mathematisches Institut der LMU, Theresienstr. 39, 80333
M\"unchen, Germany.}

\title[Electrodynamic Two-Body Problem on the Straight Line]{Electrodynamic
Two-Body Problem\\for prescribed Initial Data\\on the Straight line}

\begin{abstract}
    Due to the finite speed of light, direct electrodynamic interaction between
    point charges can naturally be described by a system of ordinary differential
    equations involving delays. As electrodynamics is time-symmetric, these
    delays appear as time-like retarded as well as advanced arguments in the
    fundamental equations of motion -- the so-called
    Fokker-Schwarzschild-Tetrode (FST) equations. However, for special initial
    conditions breaking the time-symmetry, effective equations can be derived
    which are purely retarded. Dropping radiation terms, which in many
    situations are very small, the latter equations are called Synge equations.
    In both cases, few mathematical results are available on existence of
    solutions, and even fewer on uniqueness. We investigate the situation of two
    like point-charges in $3+1$ space-time dimensions restricted to motion on a straight
    line.  We give a priori estimates on the asymptotic motion and, using a
    Leray-Schauder argument, prove: 1) Existence of solutions to the FST
    equations on the future or past half-line given finite trajectory strips; 2) Global
    existence of the Synge equations for Newtonian Cauchy data; 3) Global
    existence of a FST toy model that involves advanced and retarded terms.
    Furthermore, we give a sufficient criterion that uniquely distinguishes
    solutions by means of finite trajectory strips.
\end{abstract}

\maketitle

\section{Introduction and Main Results}

The direct electrodynamic interaction between point-charges is a prime example
for systems of ordinary differential equations involving delays. Its fundamental
equations of motion can be inferred by means of an informal variational
principle of the action $S$ which is given as a functional of the world-lines of the
point-charges, $z_i:\mathbb R\to\mathbb R^4$, $\tau\mapsto z_i(\tau)$: 
\begin{equation}
\label{eq:action}
\begin{split}
    S[z_{i=1,\ldots,n}]
    =&
    -\sum_{i=1}^N \int m_i\sqrt{dz_{i\mu}(\tau)dz_i^\mu(\tau)} \\
    &-\sum_{i=1}^N\sum_{j=1,j\neq i}^N \frac{e_ie_j}{2} \int dz_{i\mu} \int
    dz_j^\mu\delta\Big((z_i(\tau)-z_j(\sigma))^2\Big)\,.
    \end{split}
\end{equation}
In this relativistic notation, $\tau$ is the parametrization of the world-line,
and $z_i=(z_i^\mu)_{\mu=0,1,2,3}$ denotes the time and space  coordinates, $z_i^0$
and $\mathbf z=(z_i^j)_{j=1,2,3}$, respectively. The integral $\int dz_i^\mu$ is
to be interpreted as the line integral $\int d\tau \dot z_i\mu(\tau)$, where
dots denote derivatives w.r.t.\ $\tau$. Furthermore, the summation convention
$a_\mu b^\mu=a^0 b^0 - \mathbf a\cdot \mathbf b$ is used so that $a^2=a_\mu
a^\mu$ equals the square of the relevant, indefinite Minkowski metric in
relativistic space-time. The symbol $\delta$ denotes the one-dimensional Dirac
delta distribution, $m_i$ denotes the mass of the particles, $e_i$ the respective
charge, and we chose units such that the speed of light and the electric
constant equal one. The integral
in the first summand in \eqref{eq:action} measures the arc length of the $i$-th
world line using the Minkowski metric, and the double integral in the second summand gives rise to an
interaction between pairs of world lines whenever the Minkowski distance between
$z_i$ and $z_j$ is zero. The extrema of the action $S$, i.e., $z_i$ such that
$\left.\frac{\text d}{\text d\epsilon}\right|_{\epsilon=0}S[z_i+\epsilon\delta z_i]=0$, fulfill the FST equations (also
known as Wheeler-Feynman equations):
\begin{align}
    \label{eq:Lorentz-force}
    m_i \ddot z_i^\mu(\tau) 
    = 
    e_i \sum_{j=1,j\neq i}^N 
        \frac12 \sum_\pm 
        F^{\mu\nu}_{j\pm}\big(z_i(\tau)\big)\dot z_{i\nu}(\tau),
        \qquad
        i=1,2,\ldots,N
\end{align}
with the electromagnetic field tensors $F_{j\pm}^{\mu\nu}(x)=\partial/\partial x_\mu A_j^\nu(x)-\partial/\partial x_\nu
A^\mu_j(x)$ given by means of the four-vector potentials
\begin{align}
    \label{eq:potentials-and-delay}
    A_{j\pm}^\mu(x) 
    = 
    e_j \frac{\dot z_{j\pm}}{(x-z_{j\pm})_\mu \dot z_{j\pm}^\mu},
    \qquad
    z_{j\pm}^\mu = z_j^\mu(\tau_{j\pm}),
    \qquad
x^0-z_j^0(\tau_{j\pm})=\pm |\mathbf x - \mathbf z_j(\tau_{j\pm})|.
\end{align}
Equation \eqref{eq:Lorentz-force} is the special relativistic form of Newton's
force law, in electrodynamics referred to as Lorentz equation. The field tensors
$F^{\mu\nu}_{j+}$, $F^{\mu\nu}_{j-}$ are the so-called advanced ($+$) and retarded ($-$)
electrodynamic Liénard-Wiechert fields \cite{rohrlich} which are generated by the $j$-th
charge, respectively. They are given in terms of the corresponding potentials $A^\mu_{j+}$, $A^\mu_{j-}$, 
which are functionals of the world line $\tau\mapsto z_j(\tau)$ since the
parameters $\tau_{j\pm}$ are defined implicitly as solutions to the last equation
in line \eqref{eq:potentials-and-delay}. This implicit equation is due to the
delta function in \eqref{eq:action} and has a nice geometrical interpretation.
When evaluating $F_{j\pm}^{\mu\nu}(x)$ at $x=z_i$ as in the Lorentz force
\eqref{eq:Lorentz-force}, the respective $\tau_{j\pm}$ identify the space-time
points $z_{j\pm}$ which can be reached with speed of light. The existence of
both $\tau_{j+}$ and $\tau_{j-}$ is ensured as long as the world lines have
velocities smaller than speed of light. Their values are however not bounded
a priori. Since computing $F^{\mu\nu}_{j\pm}(x)$ involves taking another
derivative of $A^\mu_{j\pm}(x)$ w.r.t. to $x$, and $\tau_{j\pm}$ depends on $x$,
the right-hand side of the Lorentz equations \eqref{eq:Lorentz-force} involves
advanced and retarded four-vectors $z_{j\pm}^\mu$, four-velocities $\dot
z_{j\pm}^\mu$, and four-accelerations $\ddot z_{j\pm}^\mu$ -- hence,
\eqref{eq:Lorentz-force} is a neutral equation of mixed-type, state-dependent,
and has unbounded delay. \\

The formulation of electrodynamics by means of direct interaction as in
\eqref{eq:Lorentz-force} is due to ideas and works of Gauss \cite{Gauss}, Fokker,
\cite{Fokker} Tetrode \cite{Tetrode}, Schwarzschild \cite{Schwarzschild}.
Wheeler and Feynman \cite{WF1,WF2}
showed that this formulation is capable of explaining the irreversible nature of
radiation. Beyond that, it is the only candidate for a singularity free
formulation of classical electrodynamics. For a more detailed discussion, see
also the overview article \cite{GernotDirkDetlefGuenter}. Mathematically, however, even global
existence of solutions to \eqref{eq:Lorentz-force} is an open problem.  The
few rigorous results available apply to special situations only. In the special case of two
like charges restricted on the straight line, global existence for prescribed
asymptotic data was proven in \cite{Bauer}.  In the case of
$N$ arbitrary extended charges in $3+1$ space-time dimensions, the
existence of solutions for prescribed Newtonian Cauchy data on finite, but
arbitrary large time intervals was shown in \cite{Dirk2}. If, furthermore, the two charges are
initially sufficiently far apart and have zero velocities the corresponding
solutions were shown to be unique \cite{Driver2}.\\

In this paper we also study the case of two like charges and restrict us to
solutions that describe motion on a straight line as in \cite{Bauer}. However, in
contrast to \cite{Bauer}, we aim at existence of solutions for prescribed data at
finite times instead of asymptotic data. As long as the motion takes place on a
straight line, the equations of motion \eqref{eq:Lorentz-force} can be simplified
as follows. To keep the notation short, we express \eqref{eq:Lorentz-force} in
coordinates and introduce the trajectories $a,b$ as maps $\mathbb R\to \mathbb
R$, $t\mapsto a(t)$ and $t\mapsto b(t)$, such that $z_1=(t,a(t),0,0)$ and
$z_2=(t,b(t),0,0)$. Equations
\eqref{eq:Lorentz-force}-\eqref{eq:potentials-and-delay} then turn
into
\begin{equation}\label{WF}
    \begin{split}
        &\frac{\text d}{\text dt}\left(\frac{\dot a(t)}{\sqrt{1-\dot a(t)^2}}\right) = 
        \kappa_a\left[\epsilon_-\frac{\rho(b\left(t_2^-\right)}{\left(a(t)-b(t_2^-)\right)^2} 
        + \epsilon_+ \frac{\sigma(\dot b\left(t_2^+\right))}{\left(a(t)-b(t_2^+)\right)^2}\right] \\
        &\frac{\text d}{\text dt}\left(\frac{\dot b(t)}{\sqrt{1-\dot b(t)^2}}\right) = 
        -\kappa_b\left[\epsilon_-
            \frac{\sigma(\dot a\left(t_1^-\right))}{\left(b(t)-a(t_1^-)\right)^2}
        +\epsilon_+\frac{\rho(\dot a\left(t_1^+\right))}{\left(b(t)-a(t_1^+)\right)^2}\right]
    \end{split}
\end{equation}
for
\begin{align}
    \label{eq:vel-factors}
    \rho(v)=\frac{1+v}{1-v}\,,
    \qquad
    \sigma(v)=\frac{1-v}{1+v}\,.
\end{align}
Here, we introduced the following parameters for convenience of our discussion:
$\kappa_{a/b} :=\frac{e_a e_b}{m_{a/b}}$
which is the coupling constant,  
and $\epsilon_+$ and $\epsilon_-$ which allow to individually switch the advanced or retarded
terms on and off. The time-symmetric FST equations 
\eqref{eq:Lorentz-force} are recovered by setting 
$\epsilon_+=\frac12=\epsilon_-$. The analogs of the parameters
$\tau_{i\pm}$ given in \eqref{eq:potentials-and-delay}, expressed
in coordinates, are the advanced and retarded times
$t_1^\pm(a,b,t)$, $t_2^\pm(a,b,t)$. These times are functions of the
trajectories $a,b$ and time
$t$ defined by
\begin{equation}\label{t+-}
        t_1^\pm(a,b,t)=t\pm|a(t_1^\pm(a,b,t))-b(t)|\,, \qquad
        t_2^\pm(a,b,t)=t\pm|a(t)-b(t_2^\pm(a,b,t))|\,.
\end{equation}
Their dependence on $a,b,t$ will
often be suppressed in the notation. Beside $\epsilon_\pm=\frac12$, another
interesting case of \eqref{WF} is given by $\epsilon_+=0$ and $\epsilon_-=1$,
which results in the so-called Synge equations. It was shown by Wheeler and
Feynman that solutions to the time-symmetric fundamental equations of motion
\eqref{eq:Lorentz-force}, in the case of special initial configurations that
break the time symmetry, are effectively also solutions to equations of motion
involving a radiation reaction term and only retarded delays. Upon neglecting radiation reaction terms, which
for small charges (like the electron charge) and small accelerations give only
small corrections, these approximate equations take the form of the Synge
equations. For them, the following mathematical results are
available: In $3+1$ space-time dimensions existence of solutions 
for times $t\geq0$ and given admissible trajectory histories was discussed
in \cite{Angelov}. In the case of $N$ extended charges, existence and uniqueness of
solutions to the Synge equations for prescribed histories was shown in \cite{Dirk2}.
On the straight line, uniqueness w.r.t. Newtonian Cauchy data for like charges
that are initially sufficiently far apart and have zero velocities was shown in \cite{Driver1}. \\

Our present work extends these results as follows.  For the case of the FST
equations, i.e., $\epsilon_-=\frac12=\epsilon_+$, we prove existence of
solutions on the future or past half-line for initial data that consist of position and velocity of charge $a$
and a trajectory strip of charge $b$ whose ends are intersection points of the
light-cone through $\left(0,a(0)\right)$, see
Figure~\ref{fig:WF-Anfangsdaten}(i):
\begin{figure}
    \includegraphics[trim=2.3cm 14cm 2.8cm 0.6cm, clip, width=.8\textwidth]{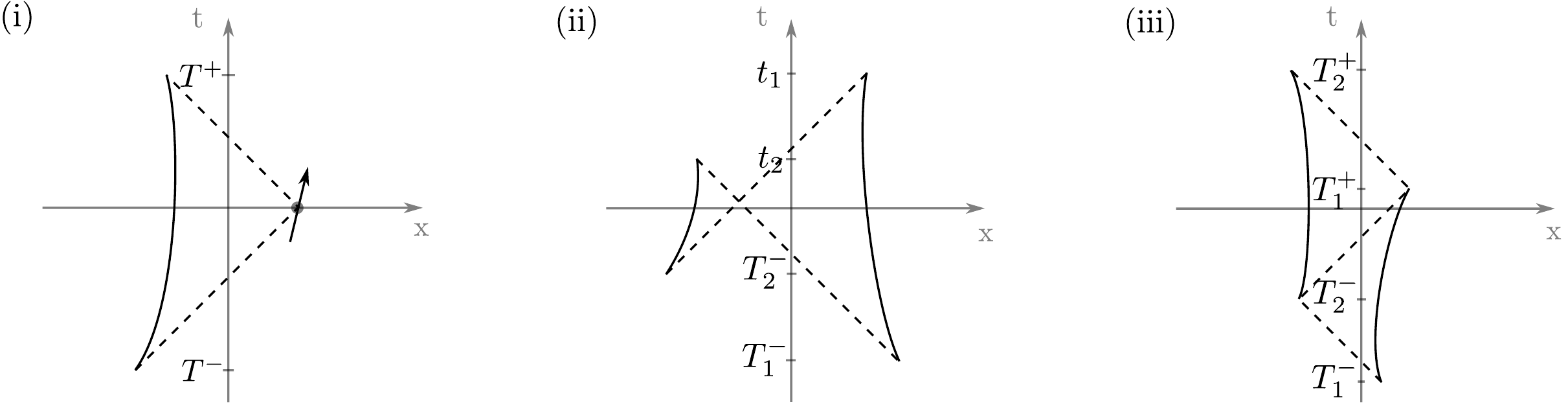}
    \caption[...]{\label{fig:WF-Anfangsdaten}Initial data for the FST equations required for
     (i) the existence result in Theorem~\ref{thm:+-};
     (ii) the alternative existence result mentioned in remark \ref{rem:Cauchy};
     (iii) the unique reconstruction of solutions as in theorem
     \ref{thm:Rekonstruktion}.}
\end{figure}
\begin{theorem}[Fokker-Schwarzschild-Tetrode Equations]\label{thm:+-}
    Let $\epsilon_+=\frac12=\epsilon_-$.
    Given initial position and velocity 
    \begin{align}
        \label{eq:initial-a}
    a_0\in\mathbb R,\, \dot a_0\in]-1,1[
    \end{align}
    of charge $a$ and an initial
    trajectory strip 
    \begin{align}
        \label{eq:initial-b}
       b_0\in C^1\left([T^-,T^+],]-\infty,a_0[\right) 
    \end{align}
    of charge $b$
    with $T^{\pm}=\pm(a_0-b_0(T^{\pm}))$ and $\|\dot
    b_0\|_\infty<1$, the following holds:
    \begin{enumerate}
        \item[a)] There is at least one pair of trajectories
            \begin{align}
                a\in C^2(\mathbb
                R_0^+)\text{ and }b\in
                C^1([T^-,\infty[)\cap C^2(]T^+,\infty)
            \end{align}such that
            the first equation of \eqref{WF} together with
            \eqref{eq:vel-factors}-\eqref{t+-} is
            satisfied for all $t\ge0$ and the second one of \eqref{WF} holds for all $t\ge T^+$, and
            furthermore
            \begin{equation}\label{Anfangswerte}
                a(0)=a_0,\,\dot a(0)=\dot a_0,\, \qquad \left.b\right|_{[T^-,T^+]}=b_0\,.
            \end{equation}
        \item[b)] If $b_0\in C^{1+n}$, $n\in\mathbb N_0$, then the
            regularity of any solution is characterized as follows: Let
            $\sigma_0:=0, \tau_0:=T^+$, $\sigma_1:=t_1^+(T^+),
            \tau_1:=t_2^+(\sigma_1)$, and $\sigma_{k+1}:=t_1^+(\tau_k),
            \tau_{k+1}:=t_2^+(\sigma_{k+1})$, then $a$ is $2k$ times
            differentiable at $\sigma_k$ for $k\in\mathbb N$, $b$ is $2k+1$
        times differentiable at $\tau_k$ for $k\in\mathbb N_0$, and
    $\left.a\right|_{]\sigma_k, \sigma_{k+1}[}\in C^{2+n+2k}$ and
    $\left.b\right|_{]\tau_k, \tau_{k+1}[}\in C^{3+n+2k}$ for any $k\in\mathbb
        N_0$; see Figure~\ref{fig:Glattheit}.
    \end{enumerate}
\end{theorem}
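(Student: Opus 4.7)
\medskip
\noindent\textbf{Proof plan.} For part (a), the strategy is a Leray--Schauder fixed-point argument applied to a truncated problem on a finite time horizon, followed by a diagonal Arzel\`a--Ascoli extraction as $T\to\infty$.

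Fix a truncation time $T>T^+$. Using the finite speed of propagation together with a target velocity bound $\|\dot a\|_\infty,\|\dot b\|_\infty\le v_\star<1$, pick a companion time $T_b(T)$ so that every $t_2^+(t)$ with $t\in[0,T]$ and every $t_1^+(s)$ with $s\in[T^+,T_b(T)]$ lies in $[T^-,T_b(T)]$ and $[0,T]$, respectively. Let $K$ be the closed convex subset of $X=C^1([0,T])\times C^1([T^-,T_b(T)])$ cut out by the prescribed initial data \eqref{Anfangswerte}, the velocity bound $\|\dot a\|_\infty,\|\dot b\|_\infty\le v_\star$, and the separation bound $\inf_t(b(t)-a(t))\ge d_\star>0$, where $v_\star,d_\star$ are the constants furnished by the a priori estimates on asymptotic motion announced in the abstract. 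For $\lambda\in[0,1]$, define an operator $\Phi_\lambda:K\to X$ by: given a trial pair $(\tilde a,\tilde b)\in K$, extract the four delays from \eqref{t+-} using $\tilde a,\tilde b$, multiply the right-hand sides of \eqref{WF} by $\lambda$, and solve the resulting two decoupled second-order ODEs forward from $t=0$ for $a$ and from $t=T^+$ for $b$, matching the data in \eqref{Anfangswerte}. A fixed point of $\Phi_1$ is a solution of \eqref{WF}.

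The maps $t_i^\pm(\tilde a,\tilde b)$ are $C^1$ by the implicit function theorem (the derivative of the defining equation is bounded below by $1-v_\star>0$), so $\Phi_\lambda$ is continuous in $(\lambda,\tilde a,\tilde b)$. Because $|a-b|\ge d_\star$ keeps the Coulomb denominators bounded away from zero and $v_\star<1$ keeps the factors $\rho,\sigma$ and $1/\sqrt{1-\dot a^2}$ finite, the right-hand sides of \eqref{WF} are uniformly $L^\infty$-bounded on $K$, so $\ddot a,\ddot b$ are bounded and $\Phi_\lambda(K)$ is relatively compact in $X$ by Arzel\`a--Ascoli. The Leray--Schauder hypothesis -- that every $\lambda$-fixed point lies strictly inside $K$, uniformly in $\lambda\in[0,1]$ -- is precisely the a priori estimate: the like-charge Coulomb repulsion prevents collisions, and a related energy-type bound rules out runaway to light speed. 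Leray--Schauder then yields a fixed point $(a_T,b_T)$ of $\Phi_1$ for each $T$. Because the bounds are uniform in $T$, a standard diagonal extraction gives a subsequence converging in $C^1$ on compacts, whose limit, by passage to the limit in the integrated form of \eqref{WF}, solves the FST equations on $[0,\infty)$ (resp.\ $[T^-,\infty)$) with the prescribed initial data.

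For part (b), regularity is obtained by induction along the intervals $]\sigma_k,\sigma_{k+1}[$ and $]\tau_k,\tau_{k+1}[$. The recursive definition of $\sigma_k,\tau_k$ through the light-cone relations \eqref{t+-} is arranged precisely so that, when $t$ runs over $]\sigma_k,\sigma_{k+1}[$, the arguments $t_2^\pm(t)$ appearing in the first equation of \eqref{WF} fall in intervals where the regularity of $b$ has already been established, and symmetrically for the second equation on $]\tau_k,\tau_{k+1}[$. Starting from the input $b_0\in C^{1+n}$ on $[T^-,T^+]$, a bootstrap using the implicit function theorem (which gives $t_i^\pm$ the same regularity as the underlying trajectories, since the transversality factors $1\mp\dot a,\,1\mp\dot b$ are bounded below) together with the two integrations of the ODE yields a net gain of two derivatives per step, producing the claimed regularities $C^{2+n+2k}$ for $a$ and $C^{3+n+2k}$ for $b$; the drop to $2k$ resp.\ $2k+1$ differentiabilities at the junctions $\sigma_k,\tau_k$ reflects the mismatch of regularity regimes across the light cone.

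The main obstacle is the a priori estimate underlying Step (iii). In a purely retarded setting one could control separation and velocities by integrating \eqref{WF} along the causal direction and invoking a monotone energy functional; the advanced delays $t_i^+$, however, link the present to the future and obstruct both causal induction and the natural monotonicity candidates. Producing bounds on $v_\star$ and $d_\star$ that are uniform over all admissible trajectory pairs, over the truncation $T$, and over the homotopy parameter $\lambda$ simultaneously is therefore the substantive analytic input that makes the entire Leray--Schauder scheme work.
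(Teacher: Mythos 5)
Your scheme has two genuine gaps, and they are precisely where the paper's actual work lies. First, the crucial a priori bounds are assumed, not proved: you posit constants $v_\star,d_\star$ "furnished by the a priori estimates announced in the abstract" and close by noting that producing them "is the substantive analytic input". But that input is the theorem's core (Proposition~\ref{prop:Asymptotik+-}). The paper obtains it by an energy comparison specific to the delayed structure: using $\dot t_2^\pm=(1\pm\dot X)/(1\pm\dot Y(t_2^\pm))$ from \eqref{t+-}, the advanced and retarded Coulomb terms are dominated by total time derivatives of $-1/(X(t)-Y(t_2^\pm))$; the sign information needed for this comes exactly from the prescribed strip ($\dot Y(t_2^\pm)\le\|\dot b_0\|_\infty$, propagated by $\ddot Y<0$), and when $\dot b_0$ takes positive values one must pass to the boosted energy $\bigl(1-\|\dot b_0\|_\infty\dot X\bigr)/\sqrt{1-\dot X^2}$. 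None of this (nor the derived separation, asymptotic relative velocity, and $1/(1+|t|)$ acceleration decay) appears in your proposal, so the Leray--Schauder hypothesis is unsupported.

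Second, the architecture you propose would not work even granting those bounds. (i) The truncation is inconsistent for a mixed-type equation: since $t_2^+(t)\ge t+d_\star$, you are forced to take $T_b(T)>T$, but then for $s$ near $T_b(T)$ one has $t_1^+(s)\ge s+d_\star>T$, contradicting your requirement $t_1^+(s)\in[0,T]$; the advanced delays push the needed data forward without end. This is why the paper sets up the fixed-point problem directly on the half-line, in a Banach space with the weighted norm \eqref{Norm} containing $\sup_t(1+|t|)|\ddot x(t)|$, and proves compactness there via the acceleration decay and the tailored Arzel\`a--Ascoli-type Lemma~\ref{lem:Kompaktheitsbed}; no limit $T\to\infty$ is taken. (ii) Your homotopy merely scales the force by $\lambda$ while keeping the Cauchy/strip data. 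For approaching initial velocities the energy estimate only yields $\inf_t(X-Y)\gtrsim\lambda$ (compare \eqref{Abstand}), so there is no $\lambda$-uniform $d_\star$, and fixed points at small $\lambda$ reach the boundary of your set $K$. The paper's homotopy instead interpolates with reference scattering trajectories $(x_0,y_0)$ satisfying \eqref{x0y0}, so that for small $\lambda$ fixed points inherit separation and relative velocity from $(x_0,y_0)$ (estimate \eqref{Abstand2}), while large $\lambda$ is handled by the energy argument. Your sketch of part~b) is broadly in line with the paper's bootstrap (Lemma~\ref{lem:Regularitaet} applied alternately to $a$ and $b$ along the light-cone iterates $\sigma_k,\tau_k$), but it rests on part~a) and so does not stand on its own.
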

\begin{figure}
    \includegraphics[trim=0cm 5.4cm 0cm 0.5cm, clip, width=0.3\textwidth]{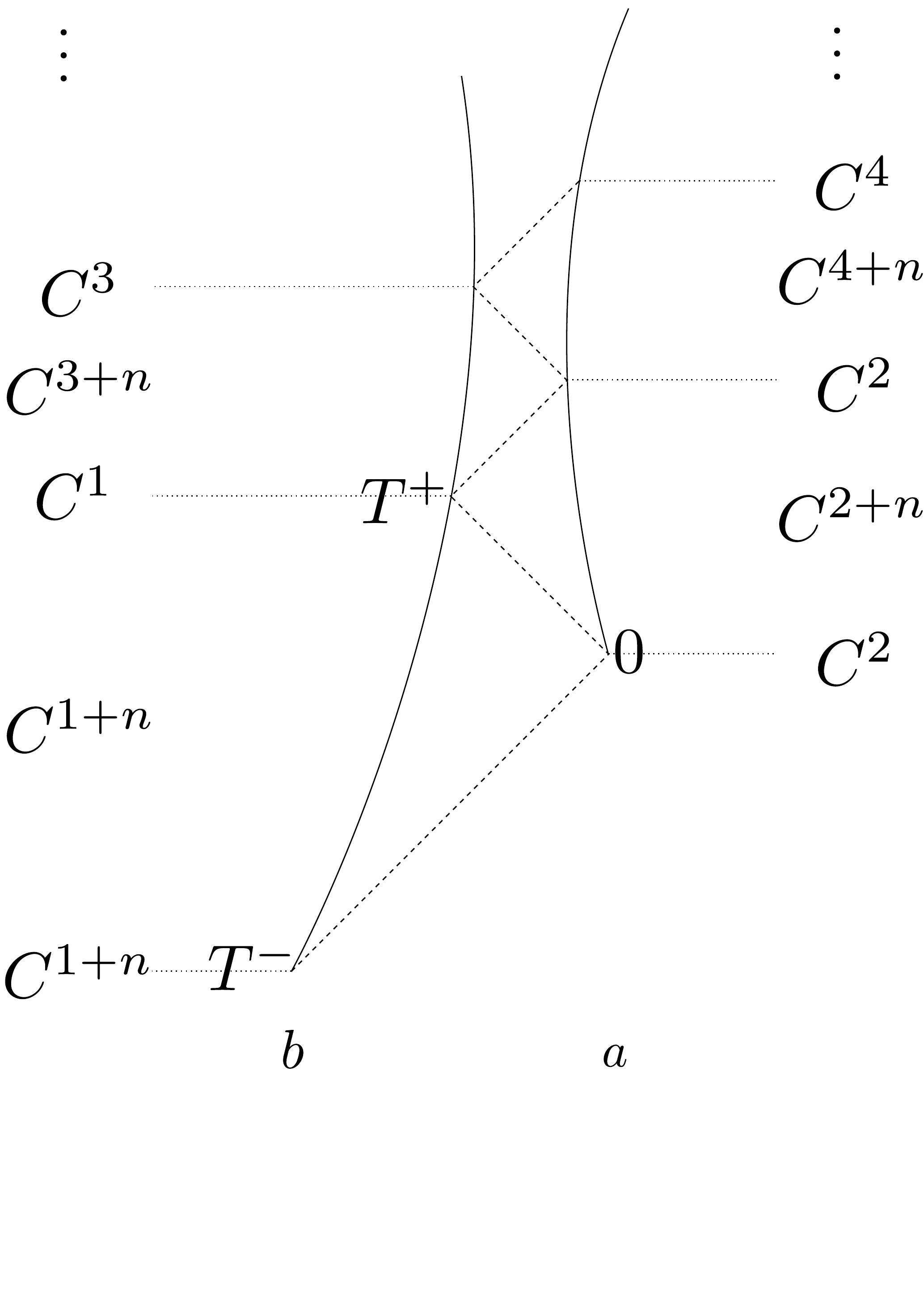}
    \caption[...]{\label{fig:Glattheit}Smoothness of solutions to \eqref{WF}
    with initial data \eqref{eq:initial-a}, \eqref{eq:initial-b} such that $b_0\in
    C^{1+n}$.
     }
\end{figure}
Here, the set $C^n(D,E)$) refers to $n$ times continuously differential
functions on $D$ with values in $E$ where derivatives at boundary points are to
be understood as one-sided ones.  By a similar argument, one can also show
existence of solutions $(a,b)$ in the past of $a(0)$ and $b(T^-)$.  The proof is
based on an a priori estimate on the asymptotic behavior of FST
solutions provided by Proposition~\ref{prop:Asymptotik+-} below. The trajectory
data needed to establish the a priori estimate is exactly the required data
\eqref{eq:initial-a}-\eqref{eq:initial-b}. This estimate allows an application
of Leray-Schauder's fixed-point theorem. \\

For the Synge equations, i.e., \eqref{WF}-\eqref{t+-} for $\epsilon_+=0$ and
$\epsilon_-=1$, we are able to control the asymptotic behavior of solutions
knowing Newtonian Cauchy data only, i.e., positions and velocities at
one time instant. In turn, this enables us to show global existence of solutions
to the Synge equations for any given Newtonian Cauchy data. 
\begin{theorem}\label{thm:-}
 Let $\epsilon_-=1,\,\epsilon_+=0$.
 For any Newtonian Cauchy data $a_0>b_0$ and $\dot a_0,\,\dot b_0\in]-1,1[$,
     there is at least one pair of trajectories $a,b\in
     C^\infty(\mathbb R)$ that solves \eqref{WF}-\eqref{t+-} with
 \begin{equation}\label{Anfangswerte-}
 a(0)=a_0,\, \dot a(0)=\dot a_0,\, \qquad b(0)=b_0,\, \dot b(0)=\dot b_0\,.
 \end{equation}
\end{theorem}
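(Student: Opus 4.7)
The plan is to realise the Synge system, with its purely retarded coupling, as a fixed-point problem for a compact operator on a Banach space of trajectory pairs and to invoke the Leray-Schauder principle, the input being the Synge-version of the a priori estimate alluded to in the paragraph just before the statement. First I would fix a truncation time $T>0$, work in the affine Banach space $X_T=\{(a,b)\in C^1([-T,T],\mathbb R)^2:\,a(0)=a_0,\,\dot a(0)=\dot a_0,\,b(0)=b_0,\,\dot b(0)=\dot b_0\}$ with the $C^1$-norm, extend any $(a,b)\in X_T$ to all of $\mathbb R$ by matching the boundary velocities (so that $\|\dot a\|_\infty,\|\dot b\|_\infty<1$ is preserved and $t_{1,2}^-$ stays well defined), and define the retarded times via \eqref{t+-}. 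For a homotopy parameter $s\in[0,1]$ let $(a^*,b^*)=\Phi_s(a,b)$ denote the solution of the ODE obtained by plugging these now-known retarded arguments into the right-hand side of the Synge form of \eqref{WF}, scaled by $s$, and integrated forward and backward from $t=0$ with the Newtonian Cauchy data \eqref{Anfangswerte-}. Fixed points of $\Phi_1$ are Synge solutions on $[-T,T]$.

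Continuity and compactness of $\Phi_s:X_T\to X_T$ come cheaply: $\Phi_s$ gains one derivative through integration, and the maps $(a,b)\mapsto t_{1,2}^-(a,b,\cdot)$ are continuous on the open set where the particles are separated and subluminal, so Arzela-Ascoli applies. The crucial input is the a priori estimate: any fixed point of $\Phi_s$ solves the Synge equations with coupling rescaled by $s$, and the Synge asymptotic analysis should yield uniform bounds $\inf_t(a(t)-b(t))\ge d_{\min}>0$ and $\sup_t\max(|\dot a|,|\dot b|)\le v_\infty<1$, depending only on the Cauchy data and not on $s\in[0,1]$ nor on $T$; sign-monotonicity in $s$ of the repulsive coupling $s\kappa_{a/b}$ should make the estimate transfer from the fully coupled case to the whole homotopy. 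Leray-Schauder then produces a fixed point $(a_T,b_T)$ of $\Phi_1$ in the $C^1$-ball defined by those bounds.

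Taking $T\to\infty$ along an exhausting sequence and using the uniform $C^1$-bounds together with a $C^2$-bound on compact subsets obtained by reading $\ddot a,\ddot b$ off \eqref{WF} (legal because $\inf(a-b)>0$ and velocities are subluminal), a diagonal Arzela-Ascoli argument produces a locally $C^1$-convergent subsequence whose limit $(a,b)\in C^1(\mathbb R)^2$ is a global solution. The claim $a,b\in C^\infty(\mathbb R)$ then follows by a standard bootstrap: with purely retarded coupling the implicit relation \eqref{t+-} defines $t\mapsto t_2^-(t)$ as a map whose regularity matches that of $b$, so that the right-hand side of \eqref{WF} is one derivative smoother than the least regular of $(a,b)$; induction upgrades $C^1$ to $C^\infty$ without the characteristic smoothness loss that would occur in the mixed advanced-retarded case of Theorem \ref{thm:+-}, as there is no trajectory-strip boundary from which singularities could emanate along the light-cone.

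The principal obstacle is the a priori estimate itself. Unlike the FST equations, the Synge equations do not arise from a variational principle, so one cannot simply invoke an energy identity to rule out collisions $a(t)=b(t)$ or light-speed escape $|\dot a|,|\dot b|\to 1$; one must instead exploit the sign of $\kappa_{a/b}$ together with a careful bookkeeping of how much work the retarded Lienard-Wiechert field of one charge can do on the other along its entire history. A second, more structural subtlety is that the forward equation at small $t>0$ already requires the trial $b$ at times $t_2^-(t)<0$, which are themselves part of the unknown; this is why one must search for the entire trajectory on $\mathbb R$ simultaneously as a single fixed point of $\Phi_1$, rather than proceeding by a classical method of steps as in the Synge problem with prescribed history.
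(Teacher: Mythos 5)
Your skeleton (compact fixed-point map plus Leray--Schauder plus a priori bounds) is the same as the paper's, but the two places where you defer to ``the Synge asymptotic analysis'' are exactly where the paper's proof lives, and as written your scheme has genuine gaps. First, the a priori estimate from Newtonian Cauchy data alone is not an available input you can cite; it is the content of the proposition proved in Section 3, and it rests on a structural observation you do not use: since the force on $b$ is always negative (and on $a$ always positive), $\dot Y$ is monotone decreasing, so for \emph{negative} times the retarded velocity $\dot Y(t_2^-)$ is bounded below by $\dot b_0$. One therefore estimates the velocities for $t\le 0$ first, integrating $\frac{\mathrm d}{\mathrm dt}\bigl(\tfrac{1-\dot b_0\dot X(t)}{\sqrt{1-\dot X(t)^2}}\bigr)$ against the total differential \eqref{Vpunkt} of the retarded Coulomb potential, and only then, with the resulting bounds $V_a^-,V_b^-$ on the retarded velocities in hand, repeats the energy argument for $t>0$ in a frame boosted by the bound from the first step. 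Without this two-step device (or a substitute) your claimed bounds $d_{\min}>0$ and $v_\infty<1$ are simply asserted. Incidentally, your remark that no energy identity is available because the Synge equations are non-variational is misleading: \eqref{tpunkt}--\eqref{Vpunkt} hold regardless of any variational structure, and the paper's estimate is precisely such an identity.

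Second, your homotopy $\Phi_s$ scales the coupling by $s$, so at $s=0$ the dynamics is free motion with the prescribed Cauchy data. For approaching data ($\dot a_0<\dot b_0$) the free trajectories cross in finite time, so the $s=0$ fixed point is not admissible, and the minimal distance of fixed points degenerates as $s\to0$: weaker repulsion gives \emph{less} separation, so your ``sign-monotonicity'' transfer runs in the wrong direction, and no open set with a fixed distance floor can avoid boundary fixed points for all $s$. This is exactly why the paper's homotopy does not scale the force but interpolates with auxiliary reference trajectories $(x_0,y_0)$ that satisfy the Cauchy data and are built to scatter apart ($\ddot x_0\ge0\ge\ddot y_0$, eventually free, with separating velocities): the bounds for $\lambda\approx1$ come from the energy argument, those for $\lambda\approx0$ from closeness to the reference trajectories, cf.\ \eqref{Abstand2}. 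A smaller, repairable inaccuracy: with purely retarded delays a fixed point of your $\Phi_1$ on $[-T,T]$ is not a Synge solution there, since for $t$ near $-T$ the retarded time $t_2^-(t)$ drops below $-T$ and refers to your artificial extension; the equation is recovered only in the limit $T\to\infty$, and your a priori bounds would then have to be proved for this modified system. The paper avoids truncation altogether by working globally in time in a Banach space whose norm carries the weight $(1+|t|)$ on accelerations, with compactness supplied by the tail-equicontinuity variant of Arzel\`a--Ascoli (Lemma \ref{lem:Kompaktheitsbed}).
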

With the same technique one can also treat the case
$\epsilon_-=0,\,\epsilon_+=1$.  We note that, although we only regard motion
along the straight line, this global existence results goes somewhat beyond the
existence results given in \cite{Driver1, Angelov, Dirk2}, which treat existence
of solutions only for the half-line $t\geq 0$ given prescribed histories.
\begin{remark}\label{rem:Cauchy}
    Due to the Lorentz invariance of the Synge equations it is also possible to
    specify the ``initial'' positions and velocities at different times $t_1$
    and $t_2$, provided the space-time points $(t_1,a(t_1))$ and $(t_2,b(t_2))$
    are space-like separated. Alternative initial data for the
    time-symmetric equations are obtained by choosing $(t_1,a(t_1))$ and
    $(t_2,a(t_2))$ space-like separated and prescribing both trajectories up to
    the corresponding retarded times, i.e., specifying
    $\left.a\right|_{[T_1^-,t_1]}$ and $\left.b\right|_{[T_2^-,t_2]}$ such that
    $T_1^-=t_2-\left[a(T_1^-)-b(t_2)\right]$ and
    $T_2^-=t_1-\left[a(t_1)-b(T_2^-)\right]$; see
    Figure~\ref{fig:WF-Anfangsdaten}(ii).
\end{remark}
One may wonder if the reason why global existence of solution to the Synge
equations can be shown already for any Newtonian Cauchy data instead of giving
initial strips of the trajectories stems from the fact that only retarded delays
are involved and advanced delays are absent (or vice versa).  This is however
not the case. To see this, we also regard a
FST toy model inferred from \eqref{WF}-\eqref{t+-} by setting
$\epsilon_+=\frac12=\epsilon_-$ and $\sigma=1=\rho$ which results in:
\begin{equation}\label{Spielmodell}
 \begin{split}
 &\frac{\text d}{\text dt}\left(\frac{\dot a(t)}{\sqrt{1-\dot a(t)^2}}\right) = 
 \frac1{\left(a(t)-b(t_2^-)\right)^2} +\frac1{\left(a(t)-b(t_2^+)\right)^2} \\
 &\frac{\text d}{\text dt}\left(\frac{\dot b(t)}{\sqrt{1-\dot b(t)^2}}\right) = 
 -\frac1{\left(b(t)-a(t_1^-)\right)^2} -\frac1{\left(b(t)-a(t_1^+)\right)^2}\,;
 \end{split}
\end{equation}
see \cite{DirkNicola} for a more detailed discussion of this model in 3+1
space-time dimensions.
Although the resulting equation of motion \eqref{Spielmodell} involves advanced as well as retarded delays,
we were able to prove global existence of solutions for Newtonian Cauchy data
with the same technique. In conclusion, the technical obstacle to infer a
similar result for \eqref{WF} is due to the lack of control of the
denominators of the velocity
factors $\sigma$ and $\tau$ in \eqref{eq:vel-factors} by means of Newtonian
Cauchy data only, and not necessarily due to the
time-symmetry.
\begin{theorem}\label{thm:Spielmodell}
 For any Newtonian Cauchy data $a_0>b_0$ and $\dot a_0,\,\dot b_0\in]-1,1[$,
     there is at least one pair of trajectories $a,b\in
     C^\infty(\mathbb R)$ that solves \eqref{Spielmodell} and \eqref{t+-} with
      \begin{equation}\label{Anfangswerte-spiel}
 a(0)=a_0,\, \dot a(0)=\dot a_0,\, b(0)=b_0,\, \dot b(0)=\dot b_0\,.
 \end{equation}
\end{theorem}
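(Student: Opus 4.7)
The strategy parallels that used for Theorem~\ref{thm:-}: combine a Leray-Schauder fixed-point argument, which gives local existence on any compact interval $[-T,T]$, with a $T$-independent a priori estimate that yields a global solution by extension. The essential observation enabling this approach for \eqref{Spielmodell} but not for the full system \eqref{WF} is that in the toy model the forces depend on the other trajectory \emph{only through positions} at the delayed times, never through velocities; hence the factors $\sigma,\rho$ of \eqref{eq:vel-factors}, whose denominators cannot be controlled by Newtonian Cauchy data alone, are absent.

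The a priori estimate is obtained in two stages. First, since the right-hand side of the first equation in \eqref{Spielmodell} is strictly positive and that of the second strictly negative whenever $a>b$ along the relevant light cones, $p_a:=\dot a/\sqrt{1-\dot a^2}$ is strictly increasing and $p_b:=\dot b/\sqrt{1-\dot b^2}$ strictly decreasing on any interval of existence; a continuity argument then propagates $a(t)>b(t)$ from $a_0>b_0$ to all such $t$, and monotonicity of the velocities together with their asymptotic limits in $[-1,1]$ forces $a(t)-b(t)\to\infty$ as $|t|\to\infty$. Second, starting from
\[
|p_a(t)-p_a(0)|\le\int_{\min(0,t)}^{\max(0,t)}\left[\frac1{(a(s)-b(t_2^-(s)))^2}+\frac1{(a(s)-b(t_2^+(s)))^2}\right]ds,
\]
I would use the light-cone identity $a(s)-b(t_2^\pm(s))=\pm(t_2^\pm(s)-s)$ and the change of variable $u=t_2^\pm(s)$, whose Jacobian $\frac{du}{ds}=\frac{1\pm\dot a(s)}{1\pm\dot b(u)}$ is positive and bounded away from $0$ and $\infty$ whenever $|\dot a|,|\dot b|$ avoid $1$. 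Combined with the (at least linear) growth of $a-b$ from the first stage, the resulting integral converges and yields $|p_a|+|p_b|\le C$ independently of $t$; equivalently, $\|\dot a\|_\infty,\|\dot b\|_\infty\le v^\ast<1$.

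With these bounds in hand, I would set up a Leray-Schauder operator $\mathcal T$ on a Banach space of $C^1$ trajectory pairs over $[-T,T]$ (augmented by the history strips determined by $t_1^\pm,t_2^\pm$) whose fixed points are the $C^2$ solutions of \eqref{Spielmodell}--\eqref{t+-} satisfying \eqref{Anfangswerte-spiel}; the above bounds supply the required a priori estimate along the homotopy $\lambda\mathcal T$, $\lambda\in[0,1]$, while compactness of $\mathcal T$ follows from Arzel\`a-Ascoli once velocities and separation are controlled. Letting $T\to\infty$ via a diagonal argument yields a global $C^1$ solution, and $C^\infty$ regularity follows by bootstrap using the smoothness of the delay maps away from coincidences. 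The principal obstacle lies in the velocity estimate: the change of variables and the separation bound must be carried out \emph{simultaneously} and self-consistently, and one must additionally exclude finite-time blow-up $\dot a(t^\ast)\to 1$. It is precisely this self-consistency that would collapse in \eqref{WF}, where $\sigma(\dot b(u))$ and $\rho(\dot b(u))$ introduce uncontrolled factors $(1\mp\dot b(u))^{-1}$ into the integrand that are not bounded by Newtonian Cauchy data alone.
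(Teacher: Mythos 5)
Your plan stalls exactly at the step you yourself flag as the ``principal obstacle'', and that step is the whole content of the theorem. The velocity estimate in your second stage is circular: the Jacobian $\tfrac{du}{ds}=\tfrac{1\pm\dot a(s)}{1\pm\dot b(u)}$ is only bounded away from $0$ and $\infty$ if $\|\dot a\|_\infty,\|\dot b\|_\infty$ are already known to be strictly below $1$, and the ``at least linear growth'' of $a-b$ is not delivered by your first stage (monotonicity of $p_a,p_b$ gives limits of the velocities, but if $\dot a_0<\dot b_0$ the limits could a priori coincide, and in any case you get no quantitative lower bound on $a-b$ in terms of the Cauchy data, which is what the Leray--Schauder boundary condition requires uniformly in the homotopy parameter). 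The paper closes this gap with a different, non-circular idea: for fixed points one has $\ddot Y\le 0$, so $Y(t)\le b_0+\dot b_0 t$ for all $t$, and by Lemma~\ref{lem:t+-} the separations at the delayed times satisfy $X(t)-Y(t_2^\pm)\ge\tfrac12\bigl(X(t)-b_0-\dot b_0 t\bigr)$; computing the ``kinetic energy'' in the frame boosted by $\dot b_0$, i.e.\ $\tfrac{d}{dt}\bigl(\tfrac{1-\dot b_0\dot X(t)}{\sqrt{1-\dot X(t)^2}}\bigr)$, the force terms are then dominated by the total derivative $\tfrac{d}{dt}\bigl(-\tfrac{4}{X(t)-b_0-\dot b_0 t}\bigr)$ (compare \eqref{Vpunkt}), and a single integration yields $\sup|\dot X|\le V_a^-<1$ with constants depending only on $a_0-b_0$, $\dot a_0$, $\dot b_0$ --- first on one half-line (depending on the sign of $\dot a_0-\dot b_0$), then on the other half-line using the just-obtained bound $V_b^-$ on the retarded velocities. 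No change of variables and no prior growth of $a-b$ is needed; the distance and scattering bounds then follow as in Proposition~\ref{prop:Asymptotik+-}.

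There is a second structural problem with your scheme: solving on compact intervals $[-T,T]$ and letting $T\to\infty$ by a diagonal argument does not make sense as stated for a mixed-type equation, because the advanced arguments $t_1^+,t_2^+$ in \eqref{Spielmodell} leave $[-T,T]$; ``history strips'' cannot supply the needed future data, and the Cauchy data \eqref{Anfangswerte-spiel} provide no strips at all. The paper avoids this by running the Leray--Schauder argument once, directly on a Banach space of trajectory pairs defined on all of $\mathbb R$ (with norm $\max(\|\dot x\|_\infty,\|\dot y\|_\infty)$), with a homotopy of the form \eqref{Hstrich}/\eqref{Fixpunkt-} that interpolates through reference trajectories carrying the Cauchy data (a plain homotopy $\lambda\mathcal T$ is incompatible with nonzero initial data), so that the fixed point is already a global solution; $C^\infty$ regularity then follows by bootstrapping as you indicate. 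Your diagnosis of why the toy model is tractable while \eqref{WF} is not --- the absence of the velocity factors $\sigma,\rho$ --- agrees with the paper, but the argument you sketch does not yet prove the a priori bounds that make the fixed-point theorem applicable.
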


These existence results do not touch upon the question of uniqueness. In the
case of the toy model \eqref{Spielmodell} it was shown in \cite{DirkNicola} that
at least for finite times solutions to \eqref{Spielmodell} and \eqref{t+-} can
be constructed by a what is commonly called a ``method of steps'' in the field
of delay differential equations. Here, we give necessary conditions to identify
solutions to \eqref{WF}-\eqref{t+-} uniquely:
\begin{theorem}\label{thm:Rekonstruktion}
    Let $\epsilon_+=\frac12=\epsilon_-$. 
    Any solution $(a,b)$ to equations \eqref{WF}-\eqref{t+-} can be uniquely
    reconstructed knowing only the trajectory strips
    \begin{align}
        \left.a\right|_{[T_1^-,T_1^+]}, \qquad \left.b\right|_{[T_2^-,T_2^+]}\,,
    \end{align}
    with times $T_1^+\in\mathbb R$, $T_2^+=t_2^+(T_1^+)$, $T_2^-=t_2^-(T_1^+)$ and
$T_1^-=t_1^-(T_2^-)$.
\end{theorem}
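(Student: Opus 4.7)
The plan is to reconstruct both trajectories by an iterative \emph{method of steps} that alternates between the first and second equations of \eqref{WF} on successively enlarged strips. The key point is that on each strip, one equation can be algebraically inverted to determine the \emph{other} particle's trajectory beyond its current strip.

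For the base step, consider the first equation of \eqref{WF} at $t\in[T_1^-,T_1^+]$. The $a$-strip determines $a(t)$, $\dot a(t)$ and $\ddot a(t)$, hence its left-hand side. Combining the identities $T_2^\pm=t_2^\pm(T_1^+)$ and $T_1^-=t_1^-(T_2^-)$ yields $t_2^+(T_1^-)=T_2^-$, so $t_2^+(t)\in[T_2^-,T_2^+]$ lies inside the $b$-strip throughout; hence the advanced term on the right is known, and the equation determines the retarded quantity $R(t):=\rho(\dot b(t_2^-(t)))/(a(t)-b(t_2^-(t)))^2$. Setting $s:=t_2^-(t)$ and using the light-cone relation $a(t)-b(s)=t-s$ together with the monotonicity of $h(t):=a(t)-t$ (from $|\dot a|<1$), one inverts $t=h^{-1}(b(s)-s)$ and obtains a first-order ODE $\dot b(s)=\rho^{-1}\bigl(R(h^{-1}(b(s)-s))\cdot(h^{-1}(b(s)-s)-s)^2\bigr)$, which together with the known endpoint $b(T_2^-)$ determines $b$ uniquely on the past extension $[t_2^-(T_1^-),T_2^-]$ by classical ODE theory. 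The analogous argument with the second equation of \eqref{WF} on $[T_2^-,T_2^+]$, whose retarded time $t_1^-(t)$ lies in the $a$-strip, determines $a$ uniquely on the future extension $[T_1^+,t_1^+(T_2^+)]$.

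One then iterates: the first equation of \eqref{WF} on the new $a$-portion $[T_1^+,t_1^+(T_2^+)]$ has retarded time $t_2^-(t)\in[T_2^-,T_2^+]$ in the already known strip and extends $b$ forward to $[T_2^+,t_2^+(t_1^+(T_2^+))]$; symmetrically, the second equation of \eqref{WF} extends $a$ backward. Defining sequences $(\sigma_n)_{n\in\mathbb Z}$ and $(\tau_n)_{n\in\mathbb Z}$ by successive light-cone intersections (analogously to Theorem~\ref{thm:+-}(b) but in both temporal directions), every iteration adds one light-cone step on each side. The iteration exhausts $\mathbb R$ because for any $C^2$ solution the quantities $1-|\dot a|$, $1-|\dot b|$, and $a-b$ admit uniform positive lower bounds on compacts, which prevents the step sizes $\sigma_{n+1}-\sigma_n$ and $\tau_{n+1}-\tau_n$ from accumulating at a finite time.

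The main technical obstacle is the inversion step: one must verify that the state-dependent equation $\dot b(s)=\rho^{-1}(\cdots)$ is a genuinely well-posed, locally Lipschitz ODE and that its solution attains precisely the expected light-cone endpoint $s=t_2^-(T_1^-)$ rather than terminating early. Both points follow from the a priori bounds $|\dot b|<1$ (keeping $\rho^{-1}$ inside its smooth domain $(0,\infty)$) and $a>b$ (keeping $h^{-1}$ and the factor $(t-s)^{-2}$ regular). A secondary, combinatorial task is the bookkeeping of the nested light-cone sequence to ensure that at each iteration exactly one of the two advanced/retarded arguments in the active equation falls in the already reconstructed region, which is a direct analog of the regularity accounting appearing in Theorem~\ref{thm:+-}(b).
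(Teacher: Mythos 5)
Your proposal follows essentially the same route as the paper: solve one particle's equation of motion for the delayed velocity factor of the other, use the light-cone relation to recast this as an ODE whose remaining input data lie in the already-known strips, invoke ODE uniqueness, and iterate forward with advanced-term inversions and backward with retarded-term inversions, the bounds $|\dot a|,|\dot b|<1$ and $a-b>0$ guaranteeing that each step reaches the full light-cone endpoint and that the steps do not accumulate. The one point you state too casually is the local Lipschitz property of the inverted velocity field: it does not follow from those a priori bounds alone but from the smoothness of the strips (any solution is automatically $C^\infty$ by bootstrapping the equations of motion), which is precisely how the paper justifies uniqueness for its ODE \eqref{GDG}.
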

Similar results are possible for other combinations of $\epsilon_\pm$. In the
following sections we provide the proofs of the presented theorems.

\section{Proof of Theorem~\ref{thm:+-}}

Similarly to \cite{Bauer}, our proof is based on the following version of
Leray-Schauder's fixed point theorem:

\begin{theorem}[Leray-Schauder Theorem \cite{Granas}]\label{thm:Schauder}
Let ($\mathcal B,\|\cdot\|$) be a Banach space, $\mathcal O\subset\mathcal B$ a bounded open subset containing the origin and $\mathcal H:[0,1]\times \overline\Omega\rightarrow B$ a compact homotopy
such that $\mathcal H(0,\cdot)$ is the zero mapping and none of the mappings $\mathcal H(\lambda,\cdot)$ for $\lambda\in[0,1]$ has a fixed point on $\partial\mathcal O$.
Then $\mathcal H(1,\cdot)$ has a fixed point.
\end{theorem}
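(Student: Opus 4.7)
The plan is to prove this via Leray--Schauder degree theory, which extends Brouwer's degree to compact perturbations of the identity in Banach spaces. The degree is an integer-valued homotopy invariant that counts solutions algebraically, so once one shows that $\deg_{\mathrm{LS}}(I-\mathcal H(1,\cdot),\mathcal O,0)\neq 0$, the desired fixed point $x=\mathcal H(1,x)$ follows from the solution property of degree. The logical chain will be: normalization at $\mathcal H(0,\cdot)=0$, homotopy invariance along $\lambda\in[0,1]$ (supplied by the hypothesis of no boundary fixed points), and the solution property at $\lambda=1$.

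For the construction, I would first establish for any compact $F:\overline{\mathcal O}\to\mathcal B$ without boundary fixed points the uniform lower bound $\delta_F:=\inf_{x\in\partial\mathcal O}\|x-F(x)\|>0$. Although $\partial\mathcal O$ need not be compact, a minimizing sequence $x_n\in\partial\mathcal O$ with $x_n-F(x_n)\to 0$ has, by compactness of $F$, a subsequence with $F(x_{n_k})\to y$; hence $x_{n_k}\to y\in\partial\mathcal O$, and continuity of $F$ forces $y=F(y)$, a contradiction. Schauder's approximation theorem then produces continuous finite-rank maps $F_n\to F$ uniformly with image in a finite-dimensional subspace $V_n\ni 0$, and for $\|F-F_n\|_\infty<\delta_F/2$ one sets
\[
\deg_{\mathrm{LS}}(I-F,\mathcal O,0):=\deg_{\mathrm{B}}(I-F_n,\mathcal O\cap V_n,0),
\]
using classical Brouwer degree in $V_n$. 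Well-definedness follows from the affine homotopy between any two admissible approximations combined with Brouwer-degree homotopy invariance, the bound $\delta_F/2$ excluding boundary solutions along that homotopy.

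The required properties then transfer from Brouwer's degree: normalization gives $\deg_{\mathrm{LS}}(I,\mathcal O,0)=1$ whenever $0\in\mathcal O$; the solution property yields a fixed point of $F$ in $\mathcal O$ whenever the degree is nonzero; and compact homotopies $(\lambda,x)\mapsto F_\lambda(x)$ without boundary fixed points produce a constant degree in $\lambda$. Applied to the given $\mathcal H$ with $\mathcal H(0,\cdot)\equiv 0$ and $0\in\mathcal O$,
\[
\deg_{\mathrm{LS}}(I-\mathcal H(1,\cdot),\mathcal O,0)=\deg_{\mathrm{LS}}(I-\mathcal H(0,\cdot),\mathcal O,0)=\deg_{\mathrm{LS}}(I,\mathcal O,0)=1\neq 0,
\]
and the solution property delivers $x\in\mathcal O$ with $x=\mathcal H(1,x)$.

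The main obstacle is the rigorous setup of $\deg_{\mathrm{LS}}$ along the full homotopy. First, I need a \emph{uniform} lower bound $\inf_{\lambda\in[0,1],\,x\in\partial\mathcal O}\|x-\mathcal H(\lambda,x)\|>0$, which follows by the same sequential-compactness argument but now exploiting that $\mathcal H([0,1]\times\overline{\mathcal O})$ is relatively compact and that $[0,1]$ is compact. Second, to invoke homotopy invariance across all $\lambda$ simultaneously, one must produce a \emph{single} finite-dimensional subspace $V$ and a common approximating compact homotopy $\tilde{\mathcal H}_n:[0,1]\times\overline{\mathcal O}\to V$ that is uniformly $\delta/2$-close to $\mathcal H$; this requires covering the totally bounded set $\mathcal H([0,1]\times\overline{\mathcal O})$ by finitely many small balls and applying a Schauder-type partition-of-unity construction whose evaluation depends continuously on both $\lambda$ and $x$. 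These two points together reduce the problem to the standard homotopy invariance of Brouwer degree inside $V$.
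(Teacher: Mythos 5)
Your argument is correct in outline, but note that the paper itself does not prove this statement at all: Theorem~\ref{thm:Schauder} is quoted as a known result and attributed to Granas--Dugundji \cite{Granas}, so there is no in-paper proof to match. What you give is the classical degree-theoretic route: construct the Leray--Schauder degree by Schauder finite-rank approximation of the compact map, verify the uniform lower bound $\inf_{\lambda,\,x\in\partial\mathcal O}\|x-\mathcal H(\lambda,x)\|>0$ (your sequential-compactness argument, using relative compactness of $\mathcal H([0,1]\times\overline{\mathcal O})$ and compactness of $[0,1]$, is the right one), and then chain normalization, homotopy invariance and the solution property to get $\deg_{\mathrm{LS}}(I-\mathcal H(1,\cdot),\mathcal O,0)=1$. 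This is sound; the only step you gloss is well-definedness of the degree: two admissible finite-rank approximations generally take values in \emph{different} finite-dimensional subspaces, so before running your affine homotopy you must pass to a common subspace containing both images (and the relevant point $0$) and invoke the reduction property of Brouwer degree, $\deg_{\mathrm B}(I-F_n,\mathcal O\cap W,0)=\deg_{\mathrm B}(I-F_n,\mathcal O\cap V_n,0)$ for $V_n\subset W$; the same remark applies when you compare the approximations at $\lambda=0$ and $\lambda=1$. By contrast, the cited source proves the theorem by the more elementary machinery of topological transversality (essential maps): one shows the constant map $\mathcal H(0,\cdot)\equiv 0$ is essential on $\overline{\mathcal O}$ and that essentiality is invariant under compact homotopies that are fixed-point free on $\partial\mathcal O$, which avoids constructing a degree altogether. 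Your approach is heavier to set up but yields the stronger, quantitative invariant $\deg_{\mathrm{LS}}$; the essential-map approach gets exactly the fixed-point alternative needed here with less apparatus. For the purposes of this paper either proof is adequate, since only the existence statement is used in the proofs of Theorems~\ref{thm:+-}, \ref{thm:-} and \ref{thm:Spielmodell}.
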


In order to apply the theorem, we need a Banach space consisting of pairs of
trajectories. For $(x,y)\in C^2(\mathbb R_0^+)\times \left\{\, C^1([T^-,\infty[)
    \, \cap \,
C^2(]T^+,\infty[)\,\right\}$, we define
\begin{equation}\label{Norm}
\|(x,y)\|:=\max\left(\|\dot x\|_\infty, \|\dot y\|_\infty, \sup_{t\ge0}|(1+|t|)\ddot x(t)|, \sup_{t>T^+}|(1+|t|)\ddot y(t)|\right) \,,
\end{equation}
choose
\begin{equation}\label{B}
    \begin{split}
        &B:=\bigg\{(x,y)\in C^2(\mathbb R_0^+)\times \left\{\,C^1([T^-,\infty[)\,\cap\,
        C^2(]T^+,\infty[)\, \right\} \bigg| \\
            &\hskip5cm x(0)=\dot x(0)=0,\,
            \left.y\right|_{[T^-,T^+]}\equiv0, \|(x,y)\|<\infty\bigg\}
        \end{split}
\end{equation}
for the Banach space, and equip it with the norm $\|\cdot\|$. Its particular choice will become clear below. From now on, we consider the values of the initial data $a_0$ and $b_0$ as fixed. Since nonzero initial data are not compatible with linearity,
they cannot directly be included in the definition of the Banach space $B$.
Instead, we fix a pair of reference trajectories
$(x_0, y_0)\in C^2(\mathbb R_0^+)\times \left\{\,C^1([T^-,\infty[)\,\cap\,
C^2(]T^+,\infty[)\,\right\}$
that satisfy the initial data and scatter apart without reaching too large
velocities or accelerations or too small distances. Precisely, we require
\begin{equation}\label{x0y0}
 \begin{split}
  &\|\dot x_0\|,\|\dot y_0\|<1,\\
  &\inf_{t\ge0}(x_0(t)-y_0(t))>0,\\
  &(\dot x_0-\dot y_0)(T^++1)>0,\\
  &\ddot x_0(t)\ge0\text{ for }t\ge0,\,\ddot y_0(t)\le0\text{ for }t> T^+\\
  &\ddot x_0(t)=\ddot y_0(t)=0\text{ for }t\ge T^++1\,.
 \end{split}
\end{equation}
The actual trajectories shall then given by
\begin{equation}\label{XY}
X:=x+x_0\,,\,\,Y:=y+y_0\,.
\end{equation}
Moreover, on a suitable subset of $B$, we define
the homotopy $H=(H_1,H_2)$ with range in $B$ and components
\begin{equation}\label{H}
 \begin{split}
 &H_1(\lambda,x,y)(t):=-x_0(t)+a_0+\int_0^t\frac{P_1(\lambda,X,Y)(s)}{\sqrt{1+P_1(\lambda,X,Y)(s)^2}}\text ds\,, \\
 &H_2(\lambda,x,y)(t):=\begin{cases}
                        0 & \text{ for } T^-\le t\le T^+ \\
			-y_0(t)+b_0(T^+)+\int_{T^+}^t\frac{P_2(\lambda,X,Y)(s)}{\sqrt{1+P_2(\lambda,X,Y)(s)^2}}\text ds & \text{ for }t\ge T^+\,.
			\end{cases}
 \end{split}
\end{equation}
Here, we used the abbreviations
\begin{equation}\label{P}
\begin{split}
 &P_1(\lambda,X,Y)(t):=(1-\lambda)\frac{\dot x_0(t)}{\sqrt{1-\dot x_0(t)^2}}+\lambda\frac{\dot a_0}{\sqrt{1-\dot a_0^2}}+\lambda\int_0^t\frac{\kappa_a}2F_1(X,Y)(s)\text ds\,, \\
 &P_2(\lambda,X,Y)(t):=(1-\lambda)\frac{\dot y_0(t)}{\sqrt{1-\dot y_0(t)^2}}+\lambda\frac{\dot b_0(T^+)}{\sqrt{1-\dot b_0(T^+)^2}}+\lambda\int_{T^+}^t\frac{\kappa_b}2F_2(X,Y)(s)\text ds\,,
 \end{split}
\end{equation}
and
\begin{equation}\label{F+-}
\begin{split}
 F_1(X,Y)(t):=&\frac{1+\dot Y\left(t_2^-(X,Y,t)\right)}{1-\dot Y\left(t_2^-(X,Y,t)\right)}\frac1{\left(X(t)-Y(t_2^-(X,Y,t))\right)^2} \\
 &+ \frac{1-\dot Y\left(t_2^+(X,Y,t)\right)}{1+\dot Y\left(t_2^+(X,Y,t)\right)}\frac1{\left(X(t)-Y(t_2^+(X,Y,t))\right)^2}\,, \\
 F_2(X,Y)(t):=&-\frac{1-\dot X\left(t_1^-(X,Y,t)\right)}{1+\dot X\left(t_1^-(X,Y,t)\right)}\frac1{\left(Y(t)-X(t_1^-(X,Y,t))\right)^2} \\
 &- \frac{1+\dot X\left(t_1^+(X,Y,t)\right)}{1-\dot X\left(t_1^+(X,Y,t)\right)}\frac1{\left(Y(t)-X(t_1^+(X,Y,t))\right)^2} 
 \end{split}
\end{equation}
with $t_{1/2}^\pm$ given by \eqref{t+-}, now depending on $X$ and $Y$ instead of  $a$ and $b$.
By definition, $H(0,\cdot)$ is the zero mapping and, if $(x,y)$ is a fixed point of $H(\lambda,\cdot)$, then
\begin{align*}
 X(t)=&a_0+\int_0^t\frac{P_1(\lambda,X,Y)(s)}{\sqrt{1+P_1(\lambda,X,Y)(s)^2}}\text ds \,,\\
 Y(t)=&b_0(T^+)+\int_0^{T^+}\frac{P_2(\lambda,X,Y)(s)}{\sqrt{1+P_2(\lambda,X,Y)(s)^2}}\text ds \,,
\end{align*}
which fulfill the equations
\begin{align*}
 \frac{\dot{X}(t)}{\sqrt{1-\dot{X}(t)^2}}=P_1(\lambda,X,Y)(t),\,\,\frac{\dot{Y}(t)}{\sqrt{1-\dot{Y}(t)^2}}=P_2(\lambda,X,Y)(t)\,,
\end{align*}
i.e.,
\begin{equation}\label{Fixpunkt}
\begin{split}
 \frac{\text d}{\text dt}\left(\frac{\dot{X}(t)}{\sqrt{1-\dot{X}(t)^2}}\right)=&\frac{\ddot X(t)}{(1-\dot X(t)^2)^{\frac32}} 
 = (1-\lambda)\frac{\ddot x_0(t)}{(1-\dot x_0(t)^2)^{\frac32}}+\frac{\lambda\kappa_a}2F_1(X,Y)(t)\,,\\
 \frac{\text d}{\text dt}\left(\frac{\dot{Y}(t)}{\sqrt{1-\dot{Y}(t)^2}}\right)=&\frac{\ddot Y(t)}{(1-\dot Y(t)^2)^{\frac32}} 
 = (1-\lambda)\frac{\ddot y_0(t)}{(1-\dot
     y_0(t)^2)^{\frac32}}+\frac{\lambda\kappa_b}2F_2(X,Y)(t).
 \end{split}
\end{equation}
Hence, for $\lambda=1$, $(X,Y)$ solve the equations of motion \eqref{WF}
respecting the initial data \eqref{Anfangswerte}.  In
Proposition~\ref{prop:Asymptotik+-} below we provide a priori
estimates ensuring that $H$ has no fixed points on the boundary of
\begin{equation}\label{Omega}
\begin{split}
 \Omega:=\bigg\{&(x,y)\in B\,\bigg|\, \|\dot X\|_\infty,\|\dot Y\|_\infty<V,\inf_{t\in[0,T]}(X(t)-Y(t))>D,
 \inf_{t\in[T,\infty[}(\dot X(t)-\dot Y(t))>v, \\
 &\sup_{t\ge0}(1+|t|)|\ddot X(t)|,\sup_{t\ge T^+}(1+|t|)|\ddot Y(t)|<A \bigg\}
 \end{split}
\end{equation}
for suitably chosen constants $v,V\in]0,1[$ and $A,D,T>0$; recall definition \eqref{XY} of $X,Y$ in terms of $x,y$.
The estimates for $\lambda\approx1$ will come from energy considerations, the
ones for $\lambda\approx0$ from the properties \eqref{x0y0} of $(x_0,y_0)$.

On $\overline\Omega$, it is rather straightforward to prove continuity and
compactness of $H$; see Lemma~\ref{lem:Stetigkeit} and
Lemma~\ref{lem:Kompaktheit} below.  Note that $\Omega$ is
open. In order to get an open set with decaying accelerations, which are crucial
for the compactness proof, we had to include the supremum norm of the
acceleration together with the blowup factor $1+|t|$ in the definition of
$\|\cdot\|$ in \eqref{Norm}.  To establish compactness of the range of $H$, we
basically apply the Arzela-Ascoli theorem on a finite time interval and take
into account that the trajectories have no large variations outside such an
interval due to the $\frac1{1+t^2}$ decay of the accelerations effected by the
Coulomb forces.  The choice of the norm forces us to apply the Arzela-Ascoli
theorem to $\dot x$ and $(1+|t|)\ddot x$. Hence, it is fortunate that the
$\frac1{1+|t|}$ a priori estimate given in \eqref{Schrankend} below suffices for the
continuity proof - had we needed $\frac1{1+t^2}$ there, we would have had to
take $1+t^2$ into the norm and to consider $(1+t^2)\ddot x$ in the compactness
proof, which is of order one on all of $\mathbb R_0^+$.

Now we perform the detailed calculations, starting with a technicality:
\begin{lemma}\label{lem:t+-}
 For any $C^1$-trajectories $(x,y)$ with $\|\dot x\|_\infty,\|\dot y\|_\infty\le C<1$ and $x(t)>y(t)$ for all $t\in\mathbb R$,
 the advanced and retarded times $t_i^\pm$ introduced in \eqref{t+-} are well-defined and
 \begin{equation*}
  \begin{split}
 &\frac{x(t)-y(t)}2\le x(t)-y(t_2^\pm(t))\le\frac{x(t)-y(t)}{1-\|\dot y\|_\infty}\,, \\
 &\frac{x(t)-y(t)}2\le x(t_1^\pm(t))-y(t)\le\frac{x(t)-y(t)}{1-\|\dot x\|_\infty} \,.
 \end{split}
 \end{equation*}
\end{lemma}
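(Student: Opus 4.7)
The plan is to exploit the hypothesis $x(t)>y(t)$ to drop the absolute values in \eqref{t+-}, which turns the defining relations into
\begin{equation*}
t_2^\pm(t) = t \pm \bigl(x(t)-y(t_2^\pm(t))\bigr),\qquad
t_1^\pm(t) = t \pm \bigl(x(t_1^\pm(t))-y(t)\bigr).
\end{equation*}
Fix $t$ and consider, for the determination of $t_2^\pm$, the scalar function $G^\pm(s):=s-t\mp\bigl(x(t)-y(s)\bigr)$. Its derivative equals $1\pm\dot y(s)$, which by assumption is bounded below by $1-\|\dot y\|_\infty\ge 1-C>0$; so $G^\pm$ is strictly increasing. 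Moreover $G^-(t)=x(t)-y(t)>0$, while the Lipschitz bound $|y(s)-y(t)|\le C|s-t|$ with $C<1$ gives $G^-(s)\to -\infty$ as $s\to -\infty$. By the intermediate value theorem there is a unique root $t_2^-<t$; the same reasoning produces a unique $t_2^+>t$ from $G^+$. The existence and uniqueness of $t_1^\pm$ follow by the symmetric argument, exchanging the roles of $x$ and $y$.

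Next I turn to the two-sided estimates. From the defining equations (with absolute values removed) one reads off directly
\begin{equation*}
|t_2^\pm-t| = x(t)-y(t_2^\pm),\qquad |t_1^\pm-t| = x(t_1^\pm)-y(t).
\end{equation*}
Writing $\Delta:=x(t)-y(t)$ and decomposing $x(t)-y(t_2^\pm)=\Delta+\bigl(y(t)-y(t_2^\pm)\bigr)$, the mean value theorem gives the sandwich
\begin{equation*}
\Delta-\|\dot y\|_\infty\,|t_2^\pm-t| \;\le\; x(t)-y(t_2^\pm) \;\le\; \Delta+\|\dot y\|_\infty\,|t_2^\pm-t|.
\end{equation*}
Substituting $|t_2^\pm-t|=x(t)-y(t_2^\pm)$ and solving the two resulting linear inequalities yields
\begin{equation*}
\frac{\Delta}{1+\|\dot y\|_\infty} \;\le\; x(t)-y(t_2^\pm) \;\le\; \frac{\Delta}{1-\|\dot y\|_\infty},
\end{equation*}
and the lower bound is at least $\Delta/2$ because $\|\dot y\|_\infty<1$. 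This is exactly the first line of the claimed inequality; repeating the argument with the Lipschitz constant of $x$ and the identity $|t_1^\pm-t|=x(t_1^\pm)-y(t)$ gives the second line.

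There is no substantive obstacle here; the lemma is essentially a bookkeeping statement. The two points requiring a small amount of attention are (i) the correct sign analysis when removing the absolute values, which must be done separately for advanced and retarded times, and (ii) the role of the strict inequality $C<1$, which is used both to ensure coercivity of $G^\pm$ (hence existence and uniqueness of $t_i^\pm$) and to reduce the factor $1/(1+\|\dot y\|_\infty)$ to the clean constant $1/2$ in the lower bound.
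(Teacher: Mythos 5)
Your proof is correct and follows essentially the same route as the paper: the estimates rest on the same mean-value/Lipschitz bookkeeping, the paper recording the exact identity $x(t)-y(t_2^\pm)=\frac{x(t)-y(t)}{1\pm\dot y(\tau)}$ with $\tau$ between $t$ and $t_2^\pm$, which is equivalent to your two-sided linear inequalities and yields the same bounds $\tfrac{x(t)-y(t)}{2}\le x(t)-y(t_2^\pm)\le\tfrac{x(t)-y(t)}{1-\|\dot y\|_\infty}$. The only difference is that you spell out existence and uniqueness of the $t_i^\pm$ via the strictly monotone functions $G^\pm$ (and the sign analysis removing the absolute values), a step the paper simply declares ``clear''.
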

\begin{proof}
 Existence is clear;
 \begin{align*}
  x(t)-y(t_2^\pm)=&x(t)-y(t)+y(t)-y(t_2^\pm)=x(t)-y(t)+\dot y(\tau)(t-t_2^\pm) \\
  =&x(t)-y(t)\mp\dot y(\tau)[x(t)-y(t_2^\pm)]
 \end{align*}
with $\tau$ between $t$ and $t_2^\pm$, so
$$x(t)-y(t_2^\pm)=\frac{x(t)-y(t)}{1\pm\dot y(\tau)}\,.$$
\end{proof}

In particular, the lemma shows that $H$ is well-defined on
$$M:=\{(x,y)\in C^1(\mathbb R_0^+)\times C^1([T^-,\infty[)\mid \|\dot X\|_\infty,\|\dot Y\|_\infty<1\,,\,\,\forall_{t\in\mathbb R}X(t)> Y(t)\}\,.$$
After these preparations, we provide the required global a priori estimates on fixed points of $H$ in terms of the initial data. We partly adapt and generalize ideas from \cite{Bauer}.
\begin{proposition}
    \label{prop:Asymptotik+-}
 There are constants $\tilde v,\tilde V\in]0,1[$ and $\tilde A,\tilde D,\tilde T>0$ such that, for any fixed point $(x,y)$ of $H:[0,1]\times M\rightarrow C^1(\mathbb R_0^+)\times C^1([T^-,\infty[)$, 
  \begin{subequations}\label{Schranken}
  \begin{align}
  &\|\dot X\|_\infty,\|\dot Y\|_\infty<\tilde V\,\,, \label{Schrankena} \\
  &\inf_{t\ge0}(X-Y)(t)>\tilde D\,\,, \label{Schrankenb}\\
  &\inf_{t\ge \tilde T}(\dot X-\dot Y)(t)>\tilde v\,\,, \label{Schrankenc}\\
  &|\ddot X(t)|, |\ddot Y(t)|<\frac{\tilde A}{1+|t|}
  \,\,.\label{Schrankend}
  \end{align}
 \end{subequations}
\end{proposition}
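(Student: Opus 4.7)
The plan is to exploit the repulsive nature of the interaction between like charges on a straight line together with a monotone energy-type identity, uniformly in the homotopy parameter $\lambda$. Fix $\lambda\in[0,1]$ and a fixed point $(x,y)\in M$ of $H(\lambda,\cdot)$, so that $(X,Y):=(x+x_0,y+y_0)$ solves the interpolated system \eqref{Fixpunkt}. First I would establish sign control on the accelerations: on $M$ we have $X>Y$ and $\|\dot X\|_\infty,\|\dot Y\|_\infty<1$, so the velocity factors in \eqref{F+-} are positive and the denominators $(X(t)-Y(t_2^\pm))^2$ are strictly positive and controlled both above and below by Lemma~\ref{lem:t+-}. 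Thus $F_1(X,Y)\geq 0$ and $F_2(X,Y)\leq 0$, and combined with $\ddot x_0\geq 0$, $\ddot y_0\leq 0$ from \eqref{x0y0}, equation \eqref{Fixpunkt} forces $\ddot X\geq 0$ on $\mathbb R_0^+$ and $\ddot Y\leq 0$ on $(T^+,\infty)$. Hence $\dot X$ is nondecreasing, $\dot Y$ is nonincreasing, and both admit limits $\dot X_\infty,\dot Y_\infty$ as $t\to\infty$.

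For the velocity bound \eqref{Schrankena}, I would derive a uniform energy identity adapting \cite{Bauer}: multiplying the first line of \eqref{Fixpunkt} by $\dot X$ and the second by $\dot Y$, using $v\,\tfrac{d}{dt}(v/\sqrt{1-v^2})=\tfrac{d}{dt}(1-v^2)^{-1/2}$, and integrating, the delayed integrand $\dot X\,F_1$ can be rewritten, via the substitutions $s\mapsto t_2^\pm(s)$, as a total $t$-derivative of a nonnegative interaction term of the schematic form $1/(X(t)-Y(t_2^\pm(t)))$, up to endpoint contributions depending only on the prescribed initial strip $b_0|_{[T^-,T^+]}$ and on $(a_0,\dot a_0)$; analogously for the $Y$-equation. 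The $(1-\lambda)$ reference-trajectory contributions are harmless by the compact-support assumption $\ddot x_0=\ddot y_0=0$ for $t\geq T^++1$ in \eqref{x0y0}. Nonnegativity of the interaction part together with the finite right-hand side then yields a uniform bound on $(1-\dot X^2)^{-1/2}+(1-\dot Y^2)^{-1/2}$, independent of $\lambda$ and of the fixed point, which is precisely \eqref{Schrankena}.

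With \eqref{Schrankena} in hand, the distance bound \eqref{Schrankenb} is immediate from monotonicity of $X-Y$ on $[T^+,\infty)$, continuity on $[0,T^+]$, and the strict positivity of $a_0-b_0(T^+)$. For \eqref{Schrankenc} I would argue by contradiction: if $\dot X_\infty=\dot Y_\infty$, then $X-Y$ grows only sublinearly, so $\int^\infty|F_1|$ and $\int^\infty|F_2|$ diverge, which contradicts \eqref{Schrankena} via \eqref{Fixpunkt}; therefore $\dot X_\infty>\dot Y_\infty$, and monotonicity produces $\tilde T$ with $\dot X-\dot Y>\tilde v$ on $[\tilde T,\infty)$. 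The ensuing linear growth of $X-Y$, transferred to $X(t)-Y(t_2^\pm(t))$ through Lemma~\ref{lem:t+-}, makes the right-hand side of \eqref{Fixpunkt} of order $(1+|t|)^{-2}$ for large $t$, which is stronger than \eqref{Schrankend}; on bounded intervals \eqref{Schrankend} follows from \eqref{Schrankena}--\eqref{Schrankenb}. The main obstacle will be the rigorous execution of the change-of-variables step in the energy identity: with both advanced and retarded delays present on a half-line, one must carefully track the endpoint contributions at $t=0$, $t=T^+$, and $t\to\infty$, verify that the infinite-time boundary terms vanish thanks to the scattering behavior being established in parallel, and do all of this uniformly in $\lambda\in[0,1]$.
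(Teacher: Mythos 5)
Your outline captures the right general mechanism (repulsion plus an energy comparison), but three of its key steps do not hold as stated, and they are exactly the points the paper has to work for. First, the claim that $\dot X\,F_1$ can be rewritten ``as a total $t$-derivative up to endpoint contributions'' is false: comparing with \eqref{Vpunkt}, the retarded term gives $\dot X\,\frac{1+\dot Y(t_2^-)}{1-\dot Y(t_2^-)}\frac1{(X-Y(t_2^-))^2}$ versus the exact derivative $\frac{\dot X-\dot Y(t_2^-)}{1-\dot Y(t_2^-)}\frac1{(X-Y(t_2^-))^2}$, and the difference carries the factor $\dot Y(t_2^-)(1+\dot X)$, which is not a derivative of anything and whose sign is not under control unless $\dot Y(t_2^-)\le 0$. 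Since $\dot b_0$ may be positive on parts of the initial strip, the paper must split into cases and, in the bad case, estimate the boosted kinetic energy $\frac{1-\|\dot b_0\|_\infty\dot X}{\sqrt{1-\dot X^2}}$ using $\dot Y(t_2^\pm)\le\|\dot b_0\|_\infty$ (which follows from your monotonicity observation $\ddot Y\le 0$); this inequality \eqref{Ypunktkleiner0} is the step your proposal silently assumes away. Second, \eqref{Schrankenb} is not ``immediate'': $\ddot X\ge 0\ge\ddot Y$ makes $X-Y$ convex on $[T^+,\infty[$, not monotone, and convexity alone does not exclude the distance becoming arbitrarily small (or a collision). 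The actual lower bound comes from keeping the $-\frac{\lambda\kappa_a}2\frac1{X(t)-Y(t_2^\pm)}$ terms in the integrated energy inequality, and the resulting bound \eqref{Abstand} is proportional to $\lambda$ and degenerates as $\lambda\to 0$.

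This points to the third and most serious omission: uniformity in $\lambda$ and in the fixed point. The proposition must produce fixed constants $\tilde V,\tilde D,\tilde v,\tilde T,\tilde A$ valid for \emph{all} $\lambda\in[0,1]$, since these define $\Omega$ and guarantee that no $H(\lambda,\cdot)$ has a fixed point on $\partial\Omega$. Your contradiction argument for \eqref{Schrankenc} (``if $\dot X_\infty=\dot Y_\infty$ then $\int|F_1|$ diverges'') is only qualitative, gives no uniform $\tilde v,\tilde T$, and breaks down for small $\lambda$ where the interaction is multiplied by $\lambda$ and at $\lambda=0$ is absent altogether; also the intermediate step is wrong as stated, since sublinear growth of $X-Y$ does not force divergence of $\int (X-Y)^{-2}$ (with monotone velocities one should instead argue that equal limits make $X-Y$ bounded). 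The paper handles all of this by a two-regime argument: for $\lambda$ bounded below, explicit quantitative bounds $v_1(\lambda)$, $S_1(\lambda)$ from integrating the interaction; for small $\lambda$, bounds inherited from the scattering properties \eqref{x0y0} of the reference trajectories $(x_0,y_0)$, e.g. $\dot X-\dot Y\ge(\dot x_0-\dot y_0)(T^++1)-4\lambda/\sqrt{1-\tilde V^2}$ and \eqref{Abstand2}. Without this small-$\lambda$ branch your proof of \eqref{Schrankenb} and \eqref{Schrankenc} cannot be completed, and consequently neither can \eqref{Schrankend}, which relies on the linear growth $X-Y\ge D_1+v_3|t|$.
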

We remark that this result implies in particular that any solution $(X,Y)$ of
\eqref{WF} with $\epsilon_\pm=\frac12$ satisfying \eqref{Anfangswerte}  obeys all
bounds \eqref{Schrankena}-\eqref{Schrankend}.  Physically, inequalities
\eqref{Schrankena} make sure that the velocities of the charges are bounded away
from one, and inequality
\eqref{Schrankenb} guarantees that the charges obey a minimal distance. More
precisely, inequality \eqref{Schrankenc} ensures that, at least starting from a
certain time, the charges move away from each other in the future, and
inequality \eqref{Schrankend} describes how the accelerations of the charges
decay and the trajectories relax to their corresponding incoming and outgoing
asymptotes. All possible fixed-points therefore describe scattering solutions.

The key in the following proof of the desired a priori bounds is a comparison
of the time derivative of the special relativistic kinetic energy with the one
of the potential energy carried in the delayed Coulomb potentials:
\begin{proof}
According to
\eqref{Fixpunkt} and \eqref{F+-},  the time derivative of the ``kinetic energy'' of the first component
of a fixed point of $H$ satisfies
 \begin{equation}\label{Energie}
 \begin{split}
  \frac{\text d}{\text dt}\left(\frac1{\sqrt{1-\dot X(t)^2}}\right)=&\frac{\dot X(t)\ddot X(t)}{(1-\dot X(t)^2)^{\frac32}} \\
  =& (1-\lambda)\frac{\dot X(t)\ddot x_0(t)}{(1-\dot x_0(t)^2)^{\frac32}}
   + \frac{\lambda\kappa_a}2\left[\frac{\dot X(t)+\dot X(t)\dot Y(t_2^-)}{1-\dot Y(t_2^-)}\frac1{(X(t)-Y(t_2^-))^2}\right. \\
   &+\left.\frac{\dot X(t)-\dot X(t)\dot Y(t_2^+)}{1+\dot Y(t_2^+)}\frac1{(X(t)-Y(t_2^+))^2}\right]\,.
  \end{split}
 \end{equation}
By definition \eqref{t+-} we have
\begin{equation}\label{tpunkt}
\dot t_2^\pm=1\pm\dot X(t)\mp\dot Y(t_2^\pm)\dot t_2^-=\frac{1\pm\dot X(t)}{1\pm\dot Y(t_2^\pm)}\,,
\end{equation}
so that  the derivative of the ``potential energy'' is given by
\begin{equation}\label{Vpunkt}
 \frac{\text d}{\text dt}\left(-\frac1{X(t)-Y(t_2^\pm)}\right)=\frac{\dot X(t)-\dot Y(t_2^\pm)\dot t_2^\pm}{[X(t)-Y(t_2^\pm)]^2}
 =\frac{\dot X(t)-\dot Y(t_2 ^\pm)}{1\pm\dot Y(t_2^\pm)}\frac1{[X(t)-Y(t_2^\pm)]^2}\,.
\end{equation}
If $\dot Y(s)$ happens to be nonpositive for all $s\in[T^-,T^+]$, then, since
the acceleration is always negative according to \eqref{Fixpunkt}, $\dot
Y(t_2^-)\le0$ for all $t\ge T^-$. Consequently, $\dot X(t)\mp\dot X(t)\dot
Y(t_2^\pm)\le\dot X(t)-\dot Y(t_2^\pm)$, and the last two terms on the
right-hand side of \eqref{Energie} can be estimated by a total differential
\eqref{Vpunkt}:
\begin{equation*}\label{kin}
 \frac{\text d}{\text dt}\left(\frac1{\sqrt{1-\dot X(t)^2}}\right)
 \le (1-\lambda)\frac{\dot X(t)\ddot x_0(t)}{(1-\dot x_0(t)^2)^{\frac32}}
   - \frac{\lambda\kappa_a}2\frac{\text d}{\text dt}\left[\frac1{X(t)-Y(t_2^-)}+\frac1{X(t)-Y(t_2^+)}\right]
\end{equation*}
Furthermore, $\dot X(t)$ in the first summand can be estimated by  one,
and integration from 0 to $t$ gives
\begin{align*}
 &\frac1{\sqrt{1-\dot X(t)^2}}-\frac1{\sqrt{1-\dot a_0^2}} \le (1-\lambda)\left[\frac{\dot x_0(t)}{\sqrt{1-\dot x_0(t)^2}} - \frac{\dot a_0}{\sqrt{1-\dot a_0^2}}\right] \\
 &+ \frac{\lambda\kappa_a}2\left[\frac1{a_0- b_0(T^-)}-\frac1{X(t)-Y(t_2^-)}
 +\frac1{a_0- b_0(T^+)}-\frac1{X(t)-Y(t_2^+)}\right]\,,
\end{align*}
 which implies
\begin{align*}
 \frac1{\sqrt{1-\dot X(t)^2}}\le& \frac2{\sqrt{1-(\dot a_0\vee\|\dot x_0\|_\infty)^2}}
 + \frac{\kappa_a}3\left[\frac1{a_0- b(T^-)}+\frac1{a_0- b(T^+)}\right] \\
 &-\frac{\lambda\kappa_a}2\left[\frac1{X(t)-Y(t_2^-)}+\frac1{X(t)-Y(t_2^+)}\right]\,.
\end{align*}
 Here, $a\vee b$ denotes the maximum of $a$ and $b$.
 Should we instead have $\dot Y(s)>0$ for some $s\in[T^-,T^+]$, the Lorentz
invariance of the equations of motion suggests to repeat the calculation for $\frac{1-\|\dot
b_0\|_\infty\dot X(t)}{\sqrt{1-\|\dot b_0\|_\infty^2}\sqrt{1-\dot X(t)^2}}$,
 which is the ``kinetic energy'' term
in an inertial frame with relative velocity $\|\dot b_0\|_\infty$:
\begin{align*}
 &\frac{\text d}{\text dt}\left(\frac{1-\|\dot b_0\|_\infty\dot X(t)}{\sqrt{1-\dot X(t)^2}}\right)
 = \frac{(\dot X(t)-\|\dot b_0\|_\infty)\ddot X(t)}{(1-\dot X(t)^2)^{\frac32}} \\
  =& (1-\lambda)\frac{(\dot X(t)-\|\dot b_0\|_\infty)\ddot x_0(t)}{(1-\dot x_0(t)^2)^{\frac32}}
   + \frac{\lambda\kappa_a}2\left[\frac{(\dot X(t)-\|\dot b_0\|_\infty)(1+\dot Y(t_2^-))}{1-\dot Y(t_2^-)}\frac1{(X(t)-Y(t_2^-))^2}\right. \\
   &+\left.\frac{(\dot X(t)-\|\dot b_0\|_\infty)(1-\dot Y(t_2^+))}{1+\dot Y(t_2^+)}\frac1{(X(t)-Y(t_2^+))^2}\right]\,.
\end{align*}
 We observe the inequality
\begin{equation}\label{Ypunktkleiner0}
 \begin{split}
 (\dot X(t)-\|\dot b_0\|_\infty)(1\mp\dot Y(t_2^\pm)) =& (1\mp\|\dot b_0\|_\infty)(\dot X(t)-\dot Y(t_2^\pm)) + (\dot Y(t_2^\pm)-\|\dot b_0\|_\infty)(1\mp\dot X(t)) \\
 \le& (1\mp\|\dot b_0\|_\infty)(\dot X(t)-\dot Y(t_2^\pm)),
 \end{split}
\end{equation}
 where $\dot Y(t_2^\pm)\le\|\dot b_0\|_\infty$ holds true. The latter can be seen
from the fact that it is satisfied for $t=0$ and that $\ddot Y<0$ implies this
also for all later times.  Using equations \eqref{tpunkt} and \eqref{Vpunkt}
again and upon integration from 0 to $t$, we now arrive at
\begin{align*}
 &\frac{1-\|\dot b_0\|_\infty\dot X(t)}{\sqrt{1-\dot X(t)^2}}-\frac{1-\|\dot b_0\|_\infty\dot a_0}{\sqrt{1-\dot a_0^2}} \le (1-\lambda)\left[\frac{\dot x_0(t)}{\sqrt{1-\dot x_0(t)^2}} - \frac{\dot a_0}{\sqrt{1-\dot a_0^2}}\right] \\
 &+ \frac{\lambda\kappa_a}2\left[\frac{1+\|\dot b_0\|_\infty}{a_0- b_0(T^-)}-\frac{1+\|\dot b_0\|_\infty}{X(t)-Y(t_2^-)}
 +\frac{1-\|\dot b_0\|_\infty}{a_0- b_0(T^+)}-\frac{1-\|\dot b_0\|_\infty}{X(t)-Y(t_2^+)}\right]\,.
\end{align*}
In consequence,  we get
\begin{equation}\label{Energie2}
\begin{split}
 \frac{1-\|\dot b_0\|_\infty}{\sqrt{1-\dot X(t)^2}}
 \le& \frac4{\sqrt{1-(\dot a_0)\vee\|\dot x_0\|_\infty)^2}} + \frac{\kappa_a}2\left[\frac2{a_0- b_0(T^-)} + \frac1{a_0- b(T^+)}\right] \\
 &- \frac{\lambda\kappa_a}2\left[\frac{1-\|\dot b_0\|_\infty}{X(t)-Y(t_2^-)} + \frac{1-\|\dot b_0\|_\infty}{X(t)-Y(t_2^+)}\right]\,.
 \end{split}
\end{equation}
Using Lemma \ref{lem:t+-}, we estimate $a_0-b_0(T^-)\ge\frac{a_0-b_0}{2}$ to find
\begin{equation*}
 \sup_{t\ge0}|\dot X(t)|\le\sqrt{1-\frac{(1-\|\dot b_0\|_\infty)^2}{\left(\frac{4}{\sqrt{1-(\dot a_0\vee\|\dot x_0\|_\infty)^2}}+\frac{3\kappa_a}{a_0-b_0(0)}\right)^2}} =: \tilde V_X \,.
\end{equation*}
The estimate $\sup_{t\ge T^+}|\dot Y(t)|\le\tilde V_Y$ is obtained in an analogous way. This concludes the proof of \eqref{Schrankena} for $\tilde V:=\tilde V_X\vee \tilde V_Y\vee\|\dot b_0\|_\infty$.

 In order to prove \eqref{Schrankenb} we go back to inequality \eqref{Energie2}.
First, we note that $X(t)-Y(t_2^\pm)\le\frac{X(t)-Y(t)}{1-\tilde V}$ according to
Lemma \ref{lem:t+-}. The resulting inequality ensures
\begin{equation}\label{Abstand}
 \inf_{t\ge0} \left(X(t)-Y(t)\right)\ge \frac{\lambda\kappa_a(1-\tilde
 V)^2}{\frac{4}{\sqrt{1-\dot a_0^2}}+\frac{3\kappa_a}{a_0-b_0(0)}}.
\end{equation}

This estimate has to be improved for small values of $\lambda$, but  before doing
that, it is more convenient to proceed with the estimates \eqref{Schrankenc} and \eqref{Schrankend} involving $\tilde T$ and $\tilde v$. Integrating the difference of the equations
\eqref{Fixpunkt} results in
\begin{align}
 \frac{\dot X(t)}{\sqrt{1-\dot X(t)^2}} - \frac{\dot Y(t)}{\sqrt{1-\dot Y(t)^2}}
 =& \lambda\left(\frac{\dot a_0}{\sqrt{1-\dot a_0^2}}-\frac{\dot b_0(0)}{\sqrt{1-\dot b_0(0)^2}}\right) \nonumber \\
 &+ (1-\lambda)\left(\frac{\dot x_0(t)}{\sqrt{1-\dot x_0(t)^2}}-\frac{\dot y_0(t)}{\sqrt{1-\dot y_0(t)^2}}\right) \nonumber \\
 &+\frac\lambda2\int_0^t(\kappa_aF_1(X,Y,s)-\kappa_bF_2(X,Y,s))\text ds \label{gu-v}\\
 \ge&-\frac4{\sqrt{1-\tilde V^2}}+\frac\lambda2\int_0^t(\kappa_aF_1(X,Y,s)-\kappa_bF_2(X,Y,s))\text ds \,. \label{uu-v}
\end{align}
 The function $\frac u{\sqrt{1-u^2}}$ is strictly monotone in $u$, so $a>b$ is
equivalent to $\frac{a}{\sqrt{1-a^2}}>\frac{b}{\sqrt{1-b^2}}$. Furthermore,
because of $\ddot
x_0\ge0$ and $\ddot y_0\le0$, $F_1$ is positive and $F_2$ negative according to
\eqref{F+-}. Therefore, equation \eqref{gu-v} shows that, if $\dot a_0\ge\dot b_0(0)$, then
$\dot X(t)\ge\dot Y(t)$ holds for all larger $t$. On the contrary, if $\dot a_0<\dot b_0(0)$ and $\dot X(t)<\dot Y(t)$, then
$$X(s)-Y(s)=a_0-b_0(0)+\int_0^s(\dot X(r)-\dot Y(r))\text dr<a_0-b_0(0)$$ for
all $s\in]0,t]$.  Thus, we find
$$|F_i(X,Y,s)|>\frac{(1-\tilde V)^2}{a_0-b_0(0)}\,.$$
That means, for
$$t\ge S_1(\lambda):=\frac{8(a_0-b_0(0))}{\lambda(\kappa_a+\kappa_b)\sqrt{1-\tilde V^2}(1-\tilde V)^2}\,,$$
\eqref{uu-v} implies $\dot X(t)-\dot Y(t)\ge0$. Recall further that, from time
$T^++1$ on, the accelerations of $x_0$ and $y_0$ are zero.  Hence, again by \eqref{Fixpunkt}, for $t\ge
(S_1(\lambda)\vee T^+)+2$ we have
\begin{equation*}\label{u2u-v}
 \begin{split}
  \frac{\dot X(t)}{\sqrt{1-\dot X(t)^2}} - \frac{\dot Y(t)}{\sqrt{1-\dot Y(t)^2}}
  \ge\frac\lambda2\int_{\left(S_1(\lambda)\vee T^+\right)+1}^{(S_1(\lambda)\vee T^+)+2}(\kappa_aF_1(X,Y,s)-\kappa_bF_2(X,Y,s))\text ds\,.
 \end{split}
\end{equation*}
Since $\frac{\text d}{\text du}\frac u{\sqrt{1-u^2}}=\frac1{(1-u^2)^{\frac32}}\le\frac1{(1-\tilde V^2)^{\frac32}}$,
it holds
$$\dot X(t)-\dot Y(t)\ge(1-\tilde V^2)^{\frac32}\left(\frac{\dot
X(t)}{\sqrt{1-\dot X(t)^2}} - \frac{\dot Y(t)}{\sqrt{1-\dot Y(t)^2}}\right)\,;$$
moreover, we have $X(s)-Y(s)\le a_0-b_0(0)+2\tilde Vs$, so that we arrive at
$$|F_i(X,Y,s)|\ge\frac{(1-\tilde V)^2}{a_0-b_0(0)+2\tilde Vs}\,.$$
 We therefore infer
\begin{equation*}
 \inf_{t\ge (S_1(\lambda)\vee T^+)+2}\left(\dot X(t)-\dot Y(t)\right)\ge v_1(\lambda):=\frac{\lambda(1-\tilde V^2)^{\frac32}}2\int_{(S_1(\lambda)\vee T^+)+1}^{(S_1(\lambda)\vee T^+)+2}\frac{(\kappa_a+\kappa_b)(1-\tilde V)^2}{2(a_0-b_0+2\tilde Vs)}\text ds\,.
\end{equation*}

 To get our hands on the missing estimates for small values of $\lambda$ we rewrite
and estimate the integrated version of equation \eqref{Fixpunkt} for the
velocity according to
\begin{align*}
 \frac{\dot X(t)}{\sqrt{1-\dot X(t)^2}}-\frac{\dot x_0(t)}{\sqrt{1-\dot x_0(t)^2}} =& \lambda\left(\frac{\dot a_0}{\sqrt{1-\dot a_0^2}} - \frac{\dot x_0(t)}{\sqrt{1-\dot x_0(t)^2}}\right)
 +\frac{\lambda\kappa_a}2\int_0^tF_1(X,Y,s)\text ds \\
 \ge&-\frac{2\lambda}{\sqrt{1-\tilde V^2}},
\end{align*}
and conclude
\begin{equation*}
 \dot X(t)-\dot x_0(t)\ge -\frac{2\lambda}{\sqrt{1-\tilde V^2}} \,.
\end{equation*}
 The latter clearly holds true if $\dot X(t)-\dot x_0(t)$ is non-negative, and in the opposite
case, it follows from the fact that $\frac{\text d}{\text du}\frac{u}{\sqrt{1-u^2}}=\frac1{(1-u^2)^{\frac32}}\ge1$.
Consequently, we have $$X(t)\ge x_0(t)-\frac{2\lambda t}{\sqrt{1-\tilde V^2}}\,;$$
 similarly we find the corresponding estimates for the other charge:
$$\dot Y(t)-\dot y_0(t)\le \frac{2\lambda}{\sqrt{1-\tilde V^2}} \qquad
\text{and}
\qquad Y(t)\le y_0(t)+\frac{2\lambda t}{\sqrt{1-\tilde V^2}}\,.$$
 Hence, the last four estimates imply
\begin{equation}\label{Abstand2}
 X(t)-Y(t)\ge \inf_{t\ge0}\left(x_0(t)-y_0(t)\right)-\frac{4\lambda|t|}{\sqrt{1-\tilde V^2}}
\end{equation}
and
\begin{align*}
 \inf_{t\ge T^++1}\left(\dot X(t)-\dot Y(t)\right)\ge& \inf_{t\ge T^++1}\left(\dot x_0(t)-\dot y_0(t)\right)-\frac{4\lambda}{\sqrt{1-\tilde V^2}} \\
 \ge& (\dot x_0-\dot y_0)(T^++1) -\frac{4\lambda}{\sqrt{1-\tilde
 V^2}}=:v_2(\lambda),
\end{align*}
 where in the last inequality we used the properties of $x_0$ and $y_0$
given in \eqref{x0y0}.

Combining the estimate $$\inf_{t\ge T^++1}\left(\dot X(t)-\dot Y(t)\right)\ge
v_2(\lambda^\prime)>0$$ for all fixed points corresponding to values
$$\lambda\le\lambda^\prime:=\frac{\sqrt{1-\tilde V^2}}{8}(\dot x_0-\dot y_0)(T^++1)\,,$$
with the estimate $$\inf_{t\ge (S_1(\lambda^\prime)\vee T^+)+2}\left(\dot
X(t)-\dot Y(t)\right)\ge v_1(\lambda^\prime)>0$$  that holds for all fixed points
corresponding to some $\lambda\ge\lambda^\prime$, we obtain $\tilde T$ and
$\tilde v$. Furthermore, equation \eqref{Abstand2} gives a positive lower bound
for the distance $X(t)-Y(t)$ up to time $t=\tilde T$ if $(X,Y)$ is a fixed point
corresponding to a $\lambda$ fulfilling
$$\lambda\le\frac{\sqrt{1-\tilde V^2}\inf_{t\ge0}(x_0(t)-y_0(t))}{16\tilde T}\,.$$
This estimate extends to all times and, together with the above estimate
\eqref{Abstand} corresponding to larger values of $\lambda$, yields the constant
$\tilde D$, which concludes the proof of inequalities \eqref{Schrankenb} and
\eqref{Schrankenc}.

The existence of $\tilde D$ and $\tilde v$ imply $X(t)-Y(t)\ge D_1+v_3|t|$ for
suitable $D_1\le\tilde D$ and $v_3\le\tilde v$, so that an estimate for $\ddot
X$ and $\ddot Y$,  which proves the remaining bound in \eqref{Schrankend}, can now be directly read off
equation \eqref{Fixpunkt}:
$$|\ddot X(t)|,|\ddot Y(t)|\le\left[\frac{\|\ddot x_0\|_\infty\vee\|\ddot y_0\|_\infty}{(1-\tilde V^2)^{\frac32}} + \frac{2(\kappa_a+\kappa_b)}{(1-\tilde V)(D_1+v_3|t|)^2}\right]\vee\sup_{T^-\le t\le T^+}|\ddot b_0|
\le\frac{\tilde A}{1+|t|}\,\,.$$

\end{proof}

For the proof of Theorem \ref{thm:+-}, which is given at the end of this
section, we need the following technical lemmata. In their proofs, the symbols
$C,C_1,C_2,\dots$ will be used for positive constants whose values may vary from
line to line.

\begin{lemma}\label{lem:Stetigkeit}
$H$ is Lipschitz continuous on $[0,1]\times\overline\Omega$.
\end{lemma}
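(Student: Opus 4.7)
My strategy would be to propagate Lipschitz bounds through the chain of definitions \eqref{F+-}, \eqref{P}, \eqref{H}, exploiting the uniform bounds available on $\overline\Omega$. Since $p\mapsto p/\sqrt{1+p^2}$ and its derivative $(1+p^2)^{-3/2}$ are both globally bounded and Lipschitz on $\mathbb R$, the contributions to $\|\dot H_i\|_\infty$ and to $\sup_t(1+|t|)|\ddot H_i(t)|$ are controlled, respectively, by Lipschitz estimates on $P_i(\lambda,X,Y)(t)$ and on $(1+|t|)\dot P_i(\lambda,X,Y)(t)$. By \eqref{P} this reduces, modulo an elementary affine $\lambda$-dependence, to a single weighted pointwise Lipschitz estimate on $F_i(X,Y)(t)$.

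The central estimate I would aim for is
\begin{equation*}
    |F_i(X_1,Y_1)(t)-F_i(X_2,Y_2)(t)|\le\frac{C}{(1+t)^2}\,\|(x_1,y_1)-(x_2,y_2)\|,
\end{equation*}
which I would establish in three steps. First, Lipschitz control of the delayed times: subtracting \eqref{t+-} for the two configurations and absorbing the resulting $Y$-difference via a mean-value step gives
\begin{equation*}
    (1\pm\dot Y_2(\tau))\bigl(t_2^{(1)\pm}-t_2^{(2)\pm}\bigr)=\pm\bigl[(X_1-X_2)(t)+(Y_2-Y_1)(t_2^{(1)\pm})\bigr],
\end{equation*}
and the bound $|\dot Y_2|\le V<1$ on $\overline\Omega$ combined with $X_j(0)=a_0$, $Y_j\equiv y_0$ on $[T^-,T^+]$ and the velocity part of the norm yields $|t_i^{(1)\pm}-t_i^{(2)\pm}|\le C(1+|t|)\,\|(x_1,y_1)-(x_2,y_2)\|$. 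Second, I would invoke the a priori linear separation $X(t)-Y(t_i^\pm(t))\ge c(1+t)$, which follows from the static distance bound $X-Y\ge D$ on $[0,T]$, the asymptotic velocity separation $\dot X-\dot Y\ge v$ on $[T,\infty)$ built into $\overline\Omega$, and Lemma \ref{lem:t+-}. Third, the Lipschitz continuity of $\rho,\sigma$ on $[-V,V]$ combined with the acceleration bound $(1+|t|)|\ddot Y|\le A$ and the previous time-delay estimate gives $|\dot Y_1(t_i^{(1)\pm})-\dot Y_2(t_i^{(2)\pm})|\le C\,\|(x_1,y_1)-(x_2,y_2)\|$. Plugging these three ingredients into the algebraic identity $u_1^{-2}-u_2^{-2}=(u_2-u_1)(u_1+u_2)/(u_1 u_2)^2$ yields the desired bound on $F_i$.

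Finally, integrating the pointwise estimate from $0$ (resp.\ $T^+$) to $t$ produces a Lipschitz bound on $P_i$ uniform in $t$, since $\int_0^\infty(1+s)^{-2}\,ds<\infty$, and the same pointwise estimate controls $(1+|t|)\dot P_i$ directly, one power of $(1+t)^{-1}$ being absorbed by the weight. Lipschitz dependence on $\lambda$ is handled in parallel, since all $\lambda$-coefficients in \eqref{P}, including $\int_0^t F_i(X,Y)(s)\,ds$, are uniformly bounded on $\overline\Omega$. I expect the main obstacle to be precisely this matching of decays: the Coulombic $(1+t)^{-2}$ falloff of $F_i$ is exactly at the borderline required to produce simultaneously a bounded integral (for $P_i$) and a bounded weighted pointwise value (for $(1+|t|)\dot P_i$), so every step in the chain must transport the decay without loss. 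In particular, the asymptotic velocity separation $\inf_{t\ge T}(\dot X-\dot Y)\ge v$ built into $\overline\Omega$, and not merely the static bound $X-Y\ge D$, is essential for the argument.
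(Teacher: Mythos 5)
Your proposal follows essentially the same route as the paper's proof: reduce everything to the weighted pointwise estimate $|F_i-\tilde F_i|\le C(1+t)^{-2}\|(x,y)-(\tilde x,\tilde y)\|$, obtained from the linear separation $X(t)-Y(t_2^\pm)\ge c(1+t)$ (via Lemma \ref{lem:t+-} and the velocity gap in $\Omega$), a self-consistent/mean-value bound $|t_i^{(1)\pm}-t_i^{(2)\pm}|\le C(1+|t|)\|\cdot\|$ using $V<1$, and the acceleration decay built into $\Omega$ to control the delayed velocity differences, then integrate for $P_i$ and absorb one power of the decay into the weight for $(1+|t|)\dot P_i$. The only deviation is bookkeeping (you estimate the delayed-time difference first and deduce the $Y$-differences, the paper does it in the reverse order), so the argument is correct and matches the paper.
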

\begin{proof}
We will repeatedly use the following decay property of the  Coulomb-like force
term in $F$ given in \eqref{F+-}: Since,
according to definition~\eqref{Omega} of $\Omega$,
$|X(t)-Y(t)|\ge D$ holds for all $t$ and $|X(t)-Y(t)|\ge D+v|t-T|$ does for
$|t|\ge T$, we get the estimate
$|X(t)-Y(t)|\ge C(1+|t|)$ for all $t$. Using  Lemma \ref{lem:t+-},
\begin{equation}\label{wachsx}
|X(t)-Y(t_2^-(t))|,|X(t_1^-(t))-Y(t)|\ge C(1+|t|) 
\end{equation}
 is satisfied, and, together with the velocity bound
\begin{equation}\label{V}
|\dot X(t)|,|\dot Y(t)|\le V
\end{equation}
from the definition of $\Omega$, the definition \eqref{F+-} of $F$ leads to
\begin{equation}\label{abfallF}
 \begin{split}
  |F_i(X,Y)(t)|\le\frac{C}{1+t^2}\,,
 \end{split}
\end{equation}
which ensures that $H$ maps into $B$.

 To show Lipschitz continuity, we choose $(\lambda,x,y),(\tilde\lambda,\tilde x,\tilde
y)\in[0,1]\times\overline\Omega$ and $t\in\mathbb R$ and, recalling definition \eqref{H} of $H$, consider
\begin{equation}\label{hpunkt1}
 \begin{split}
 &|\dot H_1(\lambda,x,y)(t)-\dot H_1(\tilde\lambda,\tilde x,\tilde y)(t)| \\
 =&\left|\frac{P_1(\lambda,X,Y)(t)}{\sqrt{1+P_1(\lambda,X,Y)(t)^2}}-\frac{P_1(\tilde\lambda,\tilde X, \tilde Y)(t)}{\sqrt{1+P_1(\tilde\lambda,\tilde X, \tilde Y)(t)^2}}\right| \\
 \le&\left|P_1(\lambda,X,Y)(t)-P_1(\tilde\lambda,\tilde X, \tilde Y)(t)\right|\,.
 \end{split}
 \end{equation}
 This inequality holds because $\left|\frac{\text d}{\text du}\frac{u}{\sqrt{1+u^2}}\right|=\left|\frac1{(1+u^2)^{\frac32}}\right|\le1$.
 Substituting definition \eqref{P} of $P_1$, splitting the summands via triangle
 inequality, and using the velocity bound \eqref{V} as well as estimate \eqref{abfallF} on $F$,
  we find
 \begin{equation}\label{hpunkt2}
 \begin{split}
 &\left|P_1(\lambda,X,Y)(t)-P_1(\tilde\lambda,\tilde X, \tilde Y)(t)\right| \\
 \le& \frac{\dot x_0(t)}{\sqrt{1-\dot x_0(t)^2}}|\lambda-\tilde\lambda| + \frac{\dot a_0}{\sqrt{1-\dot a_0^2}}|\lambda-\tilde\lambda|
 + \int_0^t\frac{\kappa_a}2|F_1(X,Y)(s)|\text ds|\lambda-\tilde\lambda| \\
 &+ |\lambda|\int_0^t\frac{\kappa_a}2|F_1(X,Y)(s)-F_1(\tilde X, \tilde Y)(s)|\text ds \\
 \le& \frac V{\sqrt{1-V^2}}|\lambda-\tilde\lambda|+\frac V{\sqrt{1-V^2}}|\lambda-\tilde\lambda|+\int_0^\infty\frac{\kappa_aC}{1+t^2}\text dt|\lambda-\tilde\lambda| \\
 &+\int_0^\infty\frac{\kappa_a}2|F_1(X,Y)(t)-F_1(\tilde X, \tilde Y)(t)|\text dt \\
 \le& C|\lambda-\tilde\lambda| + \int_{-\infty}^\infty\kappa_a|F_1(X,Y)(t)-F_1(\tilde X, \tilde Y)(t)|\text dt \,.
 \end{split}
\end{equation}
 To keep the notation short, we write 
\begin{align*}
&\tilde x+x_0=\tilde X\,,\,\,\tilde y+y_0=\tilde Y\,,\\
&t_{1/2}^\pm(X,Y,t)=t_{1,2}^\pm\,,\,\,t_{1/2}^\pm(\tilde X, \tilde Y,t)=\tilde t_{1,2}^\pm\,\\
&P_1(\lambda, X, Y)(t)=P(t)\,,\,\,P_1(\tilde\lambda,\tilde X, \tilde Y)(t)=\tilde P(t)\,, \\
&F_1(X, Y)(t)=F(t)\,,\,\,F_1(\tilde X, \tilde Y)(t)=\tilde F(t)\,,
\end{align*}
from now on. We continue with
\begin{equation*}
 \begin{split}
 |F(t)-\tilde F(t)|
 \le& \left|\frac{1-\dot Y\left(t_2^+\right)}{1+\dot Y\left(t_2^+\right)}
 -\frac{1-\dot{\tilde Y}\left(\tilde t_2^+\right)}{1+\dot{\tilde Y}\left(\tilde t_2^+\right)}\right|\frac1{\left(X(t)-Y(t_2^+)\right)^2} \\
 &+ \left|\frac{1+\dot Y\left(t_2^-\right)}{1-\dot Y\left(t_2^-\right)}
 -\frac{1+\dot{\tilde Y}\left(\tilde t_2^-\right)}{1-\dot{\tilde Y}\left(\tilde t_2^-\right)}\right|\frac1{\left(X(t)-Y(t_2^-)\right)^2} \\
 &+ \frac{1-\dot{\tilde Y}\left(\tilde t_2^+\right)}{1+\dot{\tilde Y}\left(\tilde t_2^+\right)}
 \left|\frac1{\left(X(t)-Y(t_2^+))\right)^2}-\frac1{\left(\tilde X(t)-\tilde Y(\tilde t_2^+)\right)^2}\right| \\
 &+ \frac{1+\dot{\tilde Y}\left(\tilde t_2^-\right)}{1-\dot{\tilde Y}\left(\tilde t_2^-\right)}
 \left|\frac1{\left(X(t)-Y(t_2^-))\right)^2}-\frac1{\left(\tilde X(t)-\tilde Y(\tilde t_2^-)\right)^2}\right| \,.
 \end{split}
\end{equation*}
 Exploiting $\frac{\text d}{\text du}\left(\frac{1\mp u}{1\pm
u}\right)=\mp\frac2{(1\pm u)^2}$, $\frac{\text d}{\text
du}\left(\frac1{u^2}\right)=-\frac2{u^3}$, \eqref{wachsx}, and $\|\dot
y\|_\infty,\|\dot{\tilde y}\|_\infty\le V$ leads to
\begin{equation}\label{F}
 \begin{split}
 &|F(t)-\tilde F(t)| \\
 \le& \frac2{(1-V)^2}\left[|\dot Y\left(t_2^+\right)-\dot{\tilde Y}\left(\tilde t_2^+\right)|+|\dot Y\left(t_2^-\right)-\dot{\tilde Y}\left(\tilde t_2^-\right)|\right]\frac C{1+t^2}  \\
 &+\frac{1+V}{1-V}\frac{2C}{1+|t|^3}\left[|x(t)-\tilde x(t)|+|Y(t_2^+)-\tilde Y(\tilde t_2^+)|+|Y(t_2^-)-\tilde Y(\tilde t_2^-)|\right] \,.
 \end{split}
\end{equation}
According to definition \eqref{t+-}  we have $|t_2^\pm|\le
|t|+|a_0|+|b_0(0)|+V|t|+V|t_2^\pm|$, so that $$|t_2^\pm|\le C(1+|t|)$$ holds.
 Using this bound and, again, the definition of $t_2^\pm$ and $\tilde t_2^\pm$, we arrive at
\begin{equation}\label{ytilde}
\begin{split}
 |Y(t_2^\pm)-\tilde Y(\tilde t_2^\pm)| 
 \le&|y(t_2^\pm)-\tilde y(t_2^\pm)|+|\tilde Y(t_2^\pm)-\tilde Y(\tilde t_2^\pm)| \\
 \le&\|\dot y-\dot{\tilde y}\|_\infty|t_2^\pm|+|\tilde Y(t_2^\pm)-\tilde Y(\tilde t_2^\pm)| \\
 \le&\|\dot y-\dot{\tilde y}\|_\infty|t_2^\pm| + V|t_2^\pm-\tilde t_2^\pm| \\
 \le& C(1+|t|)\|\dot y-\dot{\tilde y}\|_\infty
 +V\left(|x(t)-\tilde x(t)|+|\color{blue} Y(t_2^\pm)-\tilde Y(\tilde t_2^\pm)|\right)\,.
 \end{split}
\end{equation}
Rearranging terms, we find
\begin{align}
    |\color{blue} Y(t_2^\pm)-\tilde Y(\tilde t_2^\pm)|
 \le& \frac C{1-V}(1+|t|)\|\dot y-\dot{\tilde y}\|_\infty
 +\frac V{1-V}|x(t)-\tilde x(t)| \nonumber \\
 \le& C(1+|t|)\left(\|\dot y-\dot{\tilde y}\|_\infty+\|\dot x-\dot{\tilde x}\|_\infty\right)\,.
\label{y-ytilde}
\end{align}
This bound and the definition of $t_2^\pm$ and $\tilde t_2^\pm$ imply
\begin{equation*}
 |t_2^\pm-\tilde t_2^\pm|
 \le C(1+|t|)\left(\|\dot y-\dot{\tilde y}\|_\infty+\|\dot x-\dot{\tilde
 x}\|_\infty\right).
\end{equation*}

Therefore, using the decay of the accelerations according to the definition of
$\Omega$, it holds
\begin{equation*}
\begin{split}
  |\dot Y\left(t_2^\pm\right)-\dot{\tilde Y}\left(\tilde t_2^\pm\right)|
  \le& |\dot Y\left(t_2^\pm\right)-\dot{\tilde Y}\left(t_2^\pm\right)| + |\dot {\tilde Y}\left(t_2^\pm\right)-\dot{\tilde Y}\left(\tilde t_2^\pm\right)| \\
  \le& \|\dot y-\dot{\tilde y}\|_\infty + \|\ddot{\tilde Y}\|_\infty|t_2^\pm-\tilde t_2^\pm| \\
  \le& C\left(\|\dot y-\dot{\tilde y}\|_\infty+\|\dot x-\dot{\tilde
  x}\|_\infty\right).
 \end{split}
\end{equation*}
 Coming back to \eqref{F}, we exploit the last bound 
together with \eqref{y-ytilde} and $|x(t)-\tilde x(t)|\le
|t|\|\dot x-\dot{\tilde x}\|_\infty$ to find  
\begin{equation}\label{F2}
 |F(t)-\tilde F(t)|\le\frac C{1+t^2}\|(x,y)-(\tilde x,\tilde y)\|
\end{equation}
and, by \eqref{hpunkt1} and \eqref{hpunkt2},
\begin{equation}\label{hpunkt3}
 \begin{split}
 |\dot H_1(\lambda,x,y)(t)-\dot H_1(\tilde\lambda,\tilde x,\tilde y)(t)|
 \le&\left|P(t)-\tilde P(t)\right|
 \le C_1|\lambda-\tilde\lambda|+C_2\|(x,y)-(\tilde x,\tilde y)\|\,.
 \end{split}
\end{equation}
To complete the proof, we have to take care of the second derivatives of $H$:\\
\begin{equation}\label{Hppunkt}
 \begin{split}
&|\ddot H_1(\lambda,x,y)(t)-\ddot H_1(\tilde\lambda,\tilde x,\tilde y)(t)|
= \left|\frac{\text d}{\text dt}\left(\frac{P(t)}{\sqrt{1-P(t)^2}}-\frac{\tilde P(t)}{\sqrt{1-\tilde P(t)^2}}\right)\right| \\
=& \left|\frac{\dot P(t)}{(1+P(t)^2)^{\frac 32}} - \frac{\dot{\tilde P}(t)}{(1+\tilde P(t)^2)^{\frac 32}}\right| \\
\le& |\dot P(t)|\cdot\left|\frac1{(1+P(t)^2)^{\frac 32}}-\frac1{(1+\tilde
    P(t)^2)^{\frac 32}}\right| + \frac1{(1+\tilde P(t)^2)^{\frac
    32}}\left|\dot P(t)-\dot{\tilde P}(t)\right|\,.
    \end{split}
\end{equation}
Since $|\frac{\text d}{\text
du}\frac1{(1+u^2)^{\frac32}}|=|-\frac{3u}{(1+u^2)^{\frac52}}|\le1$ for $|u\le1|$
and, by the definition of $P$ and the decay of $F$ given in \eqref{abfallF}, we
get
\begin{equation}\label{Pschranke}
 |P(t)|\le C_1+C_2\int_0^t|F(s)|\text ds\le C_1+C_2\int_0^\infty\frac{\text
 ds}{1+s^2}\le C.
\end{equation}
 Furthermore, since $\ddot x_0(t)=0$ for sufficiently large $|t|$ as can be
inferred from \eqref{x0y0},  
\begin{equation}\label{PPunktschranke}
 |\dot P(t)|\le(1-\lambda)\left|\frac{\ddot x_0(t)}{(1-\dot x_0(t))^{\frac32}}\right|+\lambda|F(t)|\le\frac C{1+t^2}
\end{equation}
holds.
 Thus, inequality \eqref{Hppunkt} results in
\begin{align*}
 &|\ddot H_1(\lambda,x,y)(t)-\ddot H_1(\tilde\lambda,\tilde x,\tilde y)(t)|
 \le\frac{C_1}{1+t^2}|P(t)-\tilde P(t)|+C_2|\dot P(t)-\dot{\tilde P}(t)|\,.
\end{align*}
Here, estimate \eqref{hpunkt3} can be used for the first modulus and
\begin{align*}
 |\dot P(t)-\dot{\tilde P}(t)|=&\kappa_a\left|\lambda F(t)\text ds-\tilde\lambda\tilde F(t)\right| \\
 \le&|\lambda-\tilde\lambda|C_1|F(t)|
 +\tilde\lambda|F(t)-\tilde F(t)| \\
 \le&\frac{C_1}{1+t^2}|\lambda-\tilde\lambda| +\frac{C_2}{1+t^2}\|(x,y)-(\tilde x,\tilde y)\|
 \end{align*}
according to bound \eqref{F2}. Therefore,  it holds that
\begin{align*}
 (1+|t|)|\ddot H_1(\lambda,x,y)(t)-\ddot H_1(\tilde\lambda,\tilde x,\tilde y)(t)|
 \le\frac{C_1}{1+|t|}|\lambda-\tilde\lambda|+\frac{C_2}{1+|t|}\|(x,y)-\tilde x,\tilde y\|\,.
\end{align*}
Together with \eqref{hpunkt3} and the corresponding estimates for $H_2$,  we
finally find
\begin{equation*}
 \|H(\lambda,x,y)(t)-H(\tilde\lambda,\tilde x,\tilde y)(t)\|\le C_1|\lambda-\tilde\lambda|+C_2\|(x,y)-(\tilde x,\tilde y)\|\,,
\end{equation*}
i.e., that $H$ is Lipschitz continuous, which was to show.
\end{proof}

\begin{lemma}\label{lem:Kompaktheitsbed}
 If a sequence $f_1,f_2,\dots$ of bounded continuous functions on $\mathbb R$ is uniformly bounded and equicontinuous and
 \begin{equation}\label{Kompaktheitsbed}
 \lim_{S\to\infty}\sup_{t>S,n\in\mathbb N}\left(|f_n(t)-f_n(S)|\vee|f_n(-t)-f_n(-S)|\right)=0\,,
 \end{equation}
 then it has a uniformly convergent subsequence.
\end{lemma}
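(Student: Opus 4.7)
The plan is to combine the classical Arzel\`a-Ascoli theorem on compact intervals with a diagonal extraction and then upgrade compact-uniform convergence to uniform convergence on all of $\mathbb{R}$ using the tail hypothesis \eqref{Kompaktheitsbed}.

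First I would apply Arzel\`a-Ascoli to the restrictions $f_n|_{[-N,N]}$ for each $N\in\mathbb{N}$: uniform boundedness and equicontinuity of $(f_n)$ are inherited by these restrictions, so each yields a uniformly convergent subsequence. A standard Cantor diagonal argument then produces a single subsequence $(f_{n_k})_{k\in\mathbb{N}}$ that converges uniformly on every compact subset of $\mathbb{R}$ to some continuous limit $f$.

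The heart of the argument is to promote this to uniform convergence on all of $\mathbb{R}$ using \eqref{Kompaktheitsbed}. Given $\varepsilon>0$, choose $S>0$ so large that
\[
|f_n(t)-f_n(S)|<\tfrac{\varepsilon}{4}\quad\text{and}\quad |f_n(-t)-f_n(-S)|<\tfrac{\varepsilon}{4}
\]
for all $t>S$ and all $n\in\mathbb{N}$. By the uniform convergence of $(f_{n_k})_k$ on the compact interval $[-S,S]$, there is an index $K$ such that $\sup_{|t|\le S}|f_{n_k}(t)-f_{n_j}(t)|<\varepsilon/4$ for all $k,j\ge K$. For $t>S$ the triangle inequality gives
\[
|f_{n_k}(t)-f_{n_j}(t)| \le |f_{n_k}(t)-f_{n_k}(S)| + |f_{n_k}(S)-f_{n_j}(S)| + |f_{n_j}(S)-f_{n_j}(t)| < \tfrac{3\varepsilon}{4},
\]
and symmetrically for $t<-S$. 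Hence $\|f_{n_k}-f_{n_j}\|_\infty<\varepsilon$ for $k,j\ge K$, so $(f_{n_k})_k$ is Cauchy in the sup norm on $\mathbb{R}$ and therefore converges uniformly to $f$.

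There is no genuine obstacle here; the only slightly non-routine point is bookkeeping the three terms in the tail decomposition so that both the equi-oscillation bound from \eqref{Kompaktheitsbed} (applied uniformly in $n$) and the compact-interval uniform Cauchy property are used consistently with a single choice of $S$.
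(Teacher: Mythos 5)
Your proof is correct and follows essentially the same route as the paper: Arzel\`a-Ascoli on compact intervals, a Cantor diagonal extraction, and the tail condition \eqref{Kompaktheitsbed} fed into a three-term triangle inequality to obtain a uniformly Cauchy subsequence. The only (cosmetic) difference is that the paper builds the tail bound into each stage of the nested extraction (getting $\|f_{n^k_l}-f_{n^k_m}\|_\infty<3/k$ at stage $k$), whereas you first extract a compactly convergent subsequence and invoke the tail condition once at the end.
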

\begin{proof}
 Let $(S_k)$ be a sequence such that $$\sup_{t>S_k,n\in\mathbb
 N}\left(|f_n(t)-f_n(S_k)|\vee|f_n(-t)-f_n(-S_k)|\right)\le\frac1k$$ for all
 $k\in\mathbb N$. The Arzela-Ascoli Theorem guarantees the existence of a
 subsequence $(f_{n^1_l})$ such that, for all $l\in\mathbb N$, $\sup_{|t|\le
 S_1}|f_{n^1_l}(t)-f_{n^1_m}(t)|<1$ for all $l,m\in\mathbb N$. For $|t|>S_1$, we
 find
 $$|f_{n^1_l}(t)-f_{n^1_m}(t)|\le|f_{n^1_l}(t)-f_{n^1_l}(- S_1)|+|f_{n^1_l}(-
 S_1)-f_{n^1_m}(- S_1)|+|f_{n^1_m}(S_1)-f_{n^1_m}(t)|\le3\,$$ so that
 $\|f_{n^1_l}-f_{n^1_m}\|_\infty<3$ holds.
 This construction can be iterated for every $k\in\mathbb N$ to yield a
 subsequence
 $\left(f_{n^k_l}\right)_{l\in\mathbb N}$ extracted from
 $\left(f_{n^{k-1}_l}\right)_l$ such that
 $\sup_{l,m\in\mathbb N}\|f_{n^k_l}-f_{n^k_m}\|_\infty<\frac3k$. The
 diagonal sequence $\left(f_{n^k_k}\right)_{k\in\mathbb N}$ is then uniformly
 Cauchy, and thus, uniformly convergent.
\end{proof}
\begin{lemma}\label{lem:Kompaktheit}
$H([0,1]\times\overline\Omega)$ is precompact in $B$.
\end{lemma}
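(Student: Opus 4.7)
The plan is to extract, from any sequence $\bigl(\lambda_n,(x_n,y_n)\bigr) \subset [0,1]\times\overline\Omega$, a subsequence such that $H(\lambda_n,x_n,y_n)$ converges in the norm \eqref{Norm}. Since this norm controls four quantities, I will successively apply Lemma~\ref{lem:Kompaktheitsbed} to $\dot H_1$, $\dot H_2$, $(1+|t|)\ddot H_1$, and $(1+|t|)\ddot H_2$, and finish by a diagonal extraction. All required pointwise bounds have essentially already been worked out inside the proof of Lemma~\ref{lem:Stetigkeit}, so most of the work is qualitative.

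For the first derivatives, I would invoke \eqref{Pschranke} to get $|\dot H_i(t)| \le |P(t)/\sqrt{1+P(t)^2}| \le 1$ (uniform boundedness), and use \eqref{PPunktschranke} to bound $|\ddot H_i(t)| \le C/(1+t^2)$ uniformly in $n$, which gives uniform equicontinuity on all of $\mathbb R$. The tail condition \eqref{Kompaktheitsbed} of Lemma~\ref{lem:Kompaktheitsbed} also follows from this decay, since
\[
|\dot H_i(t)-\dot H_i(S)| \;\le\; \int_S^t \frac{C}{1+s^2}\,ds \;\le\; \frac{C}{S},
\]
uniformly in $n$, and analogously in the past. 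Hence Lemma~\ref{lem:Kompaktheitsbed} yields a subsequence along which $\dot H_1$ (and, after a further extraction, $\dot H_2$) converges uniformly on $\mathbb R$.

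For the second derivatives weighted by $1+|t|$, uniform boundedness and the tail condition come for free from $(1+|t|)|\ddot H_i(t)| \le C(1+|t|)/(1+t^2) \to 0$ as $|t|\to\infty$. The nontrivial point, and hence the main obstacle, is equicontinuity on compact time intervals uniformly in $n$. I would establish this from the representation $\ddot H_i = \dot P_i/(1+P_i^2)^{3/2}$ together with the product rule for $(1+|t|)\ddot H_i$, so that it suffices to prove that $\dot P_i$ and $P_i$ have moduli of continuity in $t$ that are uniform across $\overline\Omega$. For $P_i$ this is immediate from the uniform bound on $F_i$. For $\dot P_i = (1-\lambda)\ddot x_0/(1-\dot x_0^2)^{3/2}+\lambda(\kappa_a/2)F_1(X,Y)$ one needs a modulus of continuity in $t$ for $F_1(X,Y)(\,\cdot\,)$ with $(X,Y)$ ranging over $\overline\Omega$; this modulus can be extracted by the same Lipschitz-type manipulations already used in \eqref{F}--\eqref{y-ytilde}, but applied to the shift $t\rightsquigarrow t+h$ instead of the change $(X,Y)\rightsquigarrow(\tilde X,\tilde Y)$, exploiting the uniform bounds $\|\dot X\|_\infty,\|\dot Y\|_\infty\le V$, the distance bound $X(t)-Y(t_2^\pm(t))\ge C(1+|t|)$ from \eqref{wachsx}, and the uniform acceleration bound from the definition of $\Omega$. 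This produces a Lipschitz constant for $F_1(X,Y)$ on compact time intervals that depends only on $V$, $D$, $v$, $A$, and $T$.

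With equicontinuity of $(1+|t|)\ddot H_i$ in hand, Lemma~\ref{lem:Kompaktheitsbed} furnishes a further subsequence along which $(1+|t|)\ddot H_1$ and then $(1+|t|)\ddot H_2$ converge uniformly. Taking the common diagonal subsequence, all four components of $\|\cdot\|$ converge, which is exactly the assertion that $H([0,1]\times\overline\Omega)$ is precompact in $B$.
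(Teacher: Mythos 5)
Your proposal is correct and follows essentially the same route as the paper: uniform bounds on $P_i$ and $\dot P_i$ from \eqref{Pschranke}--\eqref{PPunktschranke}, uniform (in $n$) moduli of continuity for $\dot H_i$ and $(1+|\cdot|)\ddot H_i$ obtained from the $\Omega$-bounds on velocities, particle distance growth, and acceleration decay (including the shift-in-$t$ version of the estimates \eqref{F}--\eqref{y-ytilde} for $F_i$), and then Lemma~\ref{lem:Kompaktheitsbed} with a diagonal extraction. The only slips are harmless: $\dot H_1=-\dot x_0+P_1/\sqrt{1+P_1^2}$ and $\ddot H_1=-\ddot x_0+\dot P_1/(1+P_1^2)^{3/2}$ carry the fixed reference-trajectory terms you dropped, but since $x_0$ is a single function with $\|\dot x_0\|_\infty<1$ and $\ddot x_0$ uniformly continuous and vanishing for large $t$, none of the boundedness, equicontinuity, or tail estimates is affected.
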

\begin{proof}
Let $(\lambda_n,x_n,y_n)$ be a sequence in $[0,1]\times\overline\Omega$.  It is
convenient to introduce the following abbreviations: $H^{(n)}:=H(\lambda_n,x_n,y_n)$,
$P^{(n)}:=P(\lambda_n,x_n,y_n)$, and $F^{(n)}:=F(\lambda_n,x_n,y_n)$.
According to the definition of the norm on $B$
in \eqref{Norm}, we have to show
the existence of uniformly convergent subsequences of $(\dot H^{(n)})$ and
$((1+|\cdot|)\ddot H^{(n)})$. We will use the preceding Lemma \ref{lem:Kompaktheitsbed} and carry out the proof for $(\dot H_1^{(n)})$ and $((1+|\cdot|)\ddot H_1^{(n)})$ only as $(\dot H_2^{(n)})$ and
$((1+|\cdot|)\ddot H_2^{(n)})$ can be treated analogously.
Using the bound on $|P_1^{(n)}(t)|$ from \eqref{Pschranke}, we infer from definition \eqref{H} of $H$ that
\begin{align*}
 |\dot H_1^{(n)}(t)|\le\|\dot x_0\|_\infty + \left\|\frac{P_1^{(n)}}{\sqrt{1+\left(P_1^{(n)}\right)^2}}\right\|_\infty\le 2
\end{align*}
and
\begin{align}\label{H..}
 \ddot H_1^{(n)}(t)=-\ddot x_0(t)+\frac{\dot P_1^{(n)}(t)}{(1+P_1^{(n)}(t)^2)^{\frac32}}\,.
\end{align}
Together with the bound on $|\dot P_1^{(n)}(t)|$ from \eqref{PPunktschranke} and
the fact that $\ddot x_0(t)=0$ for sufficiently large $t$ from definition
\eqref{x0y0}  we infer
\begin{equation}\label{H..schranke}
\begin{split}
 (1+|t|)|\ddot H_1^{(n)}(t)| \le& (1+|t|)\left(|\ddot x_0(t)| + |\dot P_1^{(n)}(t)|\right) \\
 \le&\frac C{1+|t|}\,.
 \end{split}
\end{equation}
This means that both $(\dot H_1^{(n)})$ and $((1+|\cdot|)\ddot H_1^{(n)})$ are uniformly bounded; using in addition that
$\left|\frac{\text d}{\text du}\frac{u}{\sqrt{1+u^2}}\right|=\left|\frac1{(1+u^2)^{\frac32}}\right|\le1$ and
$\left|\frac{\text d}{\text du}\frac{u}{\sqrt{1-u^2}}\right|=\left|\frac1{(1-u^2)^{\frac32}}\right|\le\frac1{(1-V^2)^{\frac32}}$ for $|u|\le V$, we deduce for $t\ge s$
\begin{equation}\label{Hpunkt}
\begin{split}
 |\dot H_1^{(n)}(t)-\dot H_1^{(n)}(s)|\le& |\dot x_0(t)-\dot x_0(s)| + \left|P_1^{(n)}(t)-P_1^{(n)}(s)\right| \\
 \le& C|\dot x_0(t)-\dot x_0(s)| + \frac{\kappa_a}2\int_s^t|F_1^{(n)}(r)|\text dr \\
 \le& C_1\|\ddot x_0\|_\infty|t-s| + C_2\int_s^t\frac1{1+r^2}\text dr.
 \end{split}
\end{equation}
 Recalling \eqref{H..}, using the bounds on $|P_1^{(n)}(t)|$ and $|\dot
P_1^{(n)}(t)|$ from \eqref{Pschranke} and \eqref{PPunktschranke}, and employing the mean
value theorem (similarly as for inequality \eqref{Hppunkt} in the proof of Lemma
\ref{lem:Stetigkeit}), we obtain
\begin{equation}\label{H..diff}
 \begin{split}
 &|\ddot H_1^{(n)}(t)-\ddot H_1^{(n)}(s)| \\
 \le& |\dot P_1(t)|\cdot\left|\frac1{(1+P_1(t)^2)^{\frac 32}}-\frac1{(1+ P_1(s)^2)^{\frac 32}}\right|
 + \frac1{(1+ P_1(s)^2)^{\frac 32}}\left|\dot P_1(t)-\dot{ P_1}(s)\right| \\
 \le& \frac{C_1}{1+s^2}|P_1^{(n)}(t)-P_1^{(n)}(s)|+C_2|\dot P_1^{(n)}(t)-\dot P_1^{(n)}(s)|\\
 \le& \frac{C_1}{1+s^2}\int_s^t\frac1{1+r^2}\text dr
 + C_2\left[\left|\frac{\ddot x_0(t)}{(1-\dot x_0(t)^2)^{\frac32}}-\frac{\ddot x_0(s)}{(1-\dot x_0(s)^2)^{\frac32}}\right|+\frac{\kappa_a}2|F_1^{(n)}(t)-F_1^{(n)}(s)|\right]\,.
\end{split}
\end{equation}
 The summand in front of the bracket and the first one within the bracket 
can both be estimated by $\frac{C|t-s|}{1+s^4}$ because  $\ddot x_0$ was chosen uniformly continuous
and zero for large $t$. Moreover, using definition \eqref{F+-} of $F_1^{(n)}$,
splitting the summands via triangle inequality, applying the mean value theorem,
and taking into account the bounds \eqref{wachsx} and \eqref{V} on
$|X(t)-Y(t_2^\pm)|$ and $|\dot Y(t)|$, respectively,
we get
\begin{align*}
 &|F_1^{(n)}(t)-F_1^{(n)}(s)| \\
 \le& \frac{C_1}{1+s^2}\left|\dot Y^{(n)}(t_2^+(t))-\dot Y^{(n)}(t_2^+(s))\right|
 +\frac{C_2}{1+s^2}\left|\dot Y^{(n)}(t_2^-(t))-\dot Y^{(n)}(t_2^-(s))\right| \\
 &+ \frac{C_3}{1+|s|^3}\left|Y^{(n)}(t_2^+(t))- Y^{(n)}(t_2^+(s))\right|
 +\frac{C_4}{1+|s|^3}\left|Y^{(n)}(t_2^-(t))- Y^{(n)}(t_2^-(s))\right| \\
 &+\frac{C_5}{1+|s|^3}\left|X^{(n)}(t)- X^{(n)}(s)\right| \\
 \le&\frac{C}{1+|s|^3}\left(|t_2^+(t)-t_2^+(s)|+|t_2^-(t)-t_2^-(s)|+|t-s|\right)\,.
\end{align*}
Going back to definition \eqref{t+-} of $t_2^\pm$,  we observe
\begin{align*}
 |t_2^\pm(t)-t_2^\pm(s)|\le& |t-s|+|X^{(n)}(t)-X^{(n)}(s)|+\left|Y^{(n)}(t_2^\pm(t))- Y^{(n)}(t_2^\pm(s))\right| \\
 \le&|t-s|+C|t_2^\pm(t)-t_2^\pm(s)|\le C|t-s|\,,
\end{align*}
 so that
$$|F_1^{(n)}(t)-F_1^{(n)}(s)|\le\frac{C|t-s|}{1+|s|^3}$$
 is satisfied. Substituting this into \eqref{H..diff} gives
\begin{equation*}
 |\ddot H_1^{(n)}(t)-\ddot H_1^{(n)}(s)|\le\frac{C|t-s|}{1+|s|^3} \,.
\end{equation*}
Consequently, recalling estimate \eqref{H..schranke} for $|\ddot H_1^{(n)}(t)|$,
 we find
\begin{equation*}
 \begin{split}
 |(1+|t|)\ddot H_1^{(n)}(t)-(1+|s|)\ddot H_1^{(n)}(s)|
 \le& |t-s||\ddot H_1^{(n)}(t)|+(1+|s|)|\ddot H_1^{(n)}(t)-\ddot H_1^{(n)}(s)| \\
 \le& \frac{C|t-s|}{1+s^2}\,.
\end{split}
\end{equation*}
This, together with \eqref{Hpunkt}, implies equicontinuity and condition
\eqref{Kompaktheitsbed} for both $(\dot H_1^{(n)})$ and $((1+|\cdot|)\ddot
H_1^{(n)})$ as required by  Lemma \ref{lem:Kompaktheitsbed}.
\end{proof}
\begin{lemma}\label{lem:Regularitaet}
 $a$ is $(r\wedge s)+1$ times differentiable at $t$ if $b$ is $r$ times differentiable at $t_2^-(t)$ and $s$ times differentiable at $t_2^+(t)$ for $r,s\in\mathbb N$. Correspondingly, $b$ is $(r\wedge s)+1$ times differentiable at $t$ if $a$ is $r$ times differentiable at $t_1^-(t)$ and $s$ times differentiable at $t_1^+(t)$.
\end{lemma}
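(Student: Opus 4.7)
The plan is to argue by induction on $N := (r \wedge s) + 1 \ge 2$, treating only the claim for $a$; the claim for $b$ is perfectly symmetric.

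For the base case $N = 2$ (so $r = s = 1$), the hypothesis is that $\dot b(t_2^\pm(t))$ exists. The first equation of \eqref{WF} then immediately yields
\begin{equation*}
\ddot a(t) = (1-\dot a(t)^2)^{3/2}\,\kappa_a\!\left[\epsilon_-\frac{\rho(\dot b(t_2^-(t)))}{(a(t)-b(t_2^-(t)))^2} + \epsilon_+\frac{\sigma(\dot b(t_2^+(t)))}{(a(t)-b(t_2^+(t)))^2}\right],
\end{equation*}
a finite expression in the assumed-to-exist quantities (denominators are controlled by $\|\dot a\|_\infty,\|\dot b\|_\infty<1$ and $a(t) > b(t_2^\pm(t))$, cf.\ Lemma~\ref{lem:t+-}). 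Since $\dot a$ exists in a neighborhood of $t$ by the $C^1$-regularity of the solution, $a$ is twice differentiable at $t$.

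For the inductive step, assume the lemma holds up to index $N - 1$, let $r \wedge s = N - 1 \ge 2$, and suppose $b$ is $r$-times (resp.\ $s$-times) differentiable at $t_2^-(t)$ (resp.\ $t_2^+(t)$). Pointwise differentiability of order $r$ at $t_2^-(t)$ entails existence of $b^{(r-1)}$ throughout some open neighborhood $U^-$ of $t_2^-(t)$, and analogously $b^{(s-1)}$ on some $U^+ \ni t_2^+(t)$. By continuity of $\tau \mapsto t_2^\pm(\tau)$ I choose a neighborhood $V$ of $t$ with $t_2^\pm(V) \subset U^\pm$, so that the inductive hypothesis applied with $(r',s') = (r-1,s-1) \ge (1,1)$ at every $\tau \in V$ produces existence of $a^{(N-1)}$ throughout $V$. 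Differentiating the identity for $\ddot a$ displayed above exactly $N - 2$ times via the chain rule then expresses $a^{(N)}(\tau)$ as a rational function of $a^{(k)}(\tau)$ and $b^{(k)}(t_2^\pm(\tau))$ for $k \le N - 1$, together with $(t_2^\pm)^{(k)}(\tau)$ for $k \le N - 2$, the latter being rational in the former via implicit differentiation of \eqref{t+-} (with denominators $1 \pm \dot b(t_2^\pm(\tau))$ bounded away from zero by \eqref{tpunkt}). Evaluating at $\tau = t$, every ingredient exists by the induction hypothesis and the assumption on $b$, so $a^{(N)}(t)$ exists as a finite limit of the difference quotients of $a^{(N-1)}$.

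The main obstacle is the interplay between pointwise and neighborhood-wise differentiability: the chain-rule formula for $a^{(N)}(t)$ arises as a limit of difference quotients of $a^{(N-1)}$, so it requires $a^{(N-1)}$ to exist throughout a neighborhood of $t$, whereas the hypothesis on $b$ is only at the single points $t_2^\pm(t)$. The device of extracting neighborhood existence of $b^{(r-1)}, b^{(s-1)}$ from pointwise differentiability one order higher, and transporting it through the continuous maps $t_2^\pm$ to a neighborhood of $t$, is what keeps the induction running.
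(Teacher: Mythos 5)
Your argument is correct and follows essentially the same route as the paper: differentiate the equation of motion, using \eqref{tpunkt} to transfer differentiability of $a$ at $t$ and of $b$ at $t_2^\pm(t)$ to the delay functions $t_2^\pm$, so that the left-hand side always carries one more derivative than the right. Your induction on $(r\wedge s)+1$, with the explicit passage from pointwise hypotheses at $t_2^\pm(t)$ to neighborhood regularity of $a^{(N-1)}$ near $t$, merely makes precise the bookkeeping that the paper's two-sentence proof leaves implicit.
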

\begin{proof}
 According to formula \eqref{tpunkt} for their derivatives, $t_2^\pm$ are $r$ times differentiable at $t$ if $a$ is $r$ times differentiable at $t$ and $b$ is $r$ times differentiable at $t_2^\pm$, and correspondingly for $t_1^\pm$. Therefore, equations \eqref{WF} for $\ddot a$ and $\ddot b$ contain one more derivative on the left-hand side than on the right, and the claim follows.
\end{proof}

\begin{proof}[Proof of Theorem \ref{thm:+-}:]
  Choosing $V\in]\tilde V,1[$, $D\in]0,\tilde D[$, $v\in]0,\tilde v[$,
     $T>\tilde T$ and $A>\tilde A$ in the definition \eqref{Omega}
     of $\Omega$ ensures, according to Proposition \ref{prop:Asymptotik+-}, that
     $H$ defined in \eqref{H} cannot have fixed points on $\partial\Omega$.
 Obviously, $\Omega$ is a bounded open subset of Banach space $B$ defined in \eqref{B} containing the origin.
 Lemma \ref{lem:Stetigkeit} shows that $H$ is a homotopy, Lemma \ref{lem:Kompaktheit} proves its compactness.
 Furthermore, by definition, $H(0,\cdot)$ is the zero mapping and fixed points
 of $H$ satisfy \eqref{Fixpunkt}. In particular, fixed points of $H(1,\cdot)$
 satisfy \eqref{WF} with initial data \eqref{eq:initial-a} and
 \eqref{eq:initial-b}.  Thus, the existence of a solution to 
 \eqref{WF}, \eqref{eq:initial-a}-\eqref{eq:initial-b}
 follows from the
 Leray-Schauder Theorem \ref{thm:Schauder}.
 
  Concerning the regularity statement b), we observe that $\left.b\right|_{[T^-,T^+[}\in C^{n+1}$ by assumption, $b$ is once differentiable at $T^+$ by the piecewise definition of $H$ and $\left.b\right|_{]T^+,\infty[},\, \left.a\right|_{]0,\infty[}\in C^2$ by lemma \ref{lem:Regularitaet}. Consequently, also by Lemma \ref{lem:Regularitaet}, $\left.b\right|_{]T^+,\infty[}\in C^3$.
 If $n\ge1$, we can now apply the Lemma another $n$ times, alternately to $a$ at $t>0$ and $b$ at $t>T^+$, to find $\left.a\right|_{]0,\infty[\backslash\{\sigma_1,\sigma_2,\dots\}}\in C^{2+n}$ and $\left.b\right|_{]T^+,\infty[\backslash\{\tau_2,\tau_3,\dots\}}\in C^{3+n}$.
 Now, another application of the Lemma e.g. at $t\in]0,\sigma_1[$ yields no more regularity than $C^{2+n}$ because $b(t_2^-)$ is only $C^{1+n}$, but for $t\in]\sigma_1,\infty[\backslash\{\sigma_2,\sigma_3,\dots\}$, $b(t_2^-)\in C^{3+n}$, so one finds $\left.a\right|_{]\sigma_1,\infty[\backslash\{\sigma_2,\sigma_3,\dots\}}\in C^{3+n}$ and, by three more applications, $\left.b\right|_{]\tau_2,\infty[\backslash\{\tau_3,\tau_4,\dots\}}\in C^{4+n}$, $\left.a\right|_{]\sigma_1,\infty[\backslash\{\sigma_2,\sigma_3,\dots\}}\in C^{4+n}$ and $\left.b\right|_{]\tau_2,\infty[\backslash\{\tau_3,\tau_4,\dots\}}\in C^{5+n}$. Iteratively, the regularity of the segments as claimed in Theorem~\ref{thm:+-}b) is proven. The proof for $(\sigma_k)$ and $(\tau_k)$ works in the same way, the only difference being the lower initial regularity at $\sigma_1$ and $\tau_1$.
\end{proof}

\section{Proof of Theorem 1.2}

In this section we turn to the Synge equations, i.e., the FST equations without
the advanced terms.  In order to prove Theorem \ref{thm:-} we have to adopt our
strategy slightly. We define again $$\|(x,y)\|:=\max\left(\|\dot
x\|_\infty,\|\dot y\|_\infty, \sup_{t\in\mathbb R}(1+|t|)|\ddot x(t)|,
\sup_{t\in\mathbb R}(1+|t|)|\ddot y(t)|\right)$$ and make $$B^\prime:=\{(x,y)\in
C^2(\mathbb R,\mathbb R^2)\mid x(0)=\dot x(0)=y(0)=\dot
y(0)=0,\|(x,y)\|<\infty\}$$ a Banach space w.r.t. that norm.
Furthermore, we
define a map $H^\prime$ in almost the same way as we defined $H$ in
\eqref{H} in the preceding section except that this time we omit the advanced term:
\begin{equation}\label{Hstrich}
 \begin{split}
 &H_1^\prime(\lambda,x,y)(t):=-x_0(t)+a_0+\int_0^t\frac{P_1^\prime(\lambda,X,Y)(s)}{\sqrt{1+P_1^\prime(\lambda,X,Y)(s)^2}}\text ds \\
 &H_2^\prime(\lambda,x,y)(t):=-y_0(t)+b_0+\int_0^t\frac{P_2^\prime(\lambda,X,Y)(s)}{\sqrt{1+P_2^\prime(\lambda,X,Y)(s)^2}}\text ds \,,
 \end{split}
\end{equation}
where $$X:=x+x_0, Y:=y+y_0$$ with $C^2$ trajectories $(x_0, y_0)$ satisfying the Cauchy data and
\begin{equation*}
 \begin{split}
  &\|\dot x_0\|,\|\dot y_0\|<1,\\
  &\inf_{t\in\mathbb R}(x_0(t)-y_0(t))>0,\\
  &(\dot x_0-\dot y_0)(-1)<0,\,(\dot x_0-\dot y_0)(1)>0,\\
  &\ddot x_0(t)\ge0,\,\ddot y_0(t)\le0\text{ for }t\in\mathbb R\,,\\
  &\ddot x_0(t)=\ddot y_0(t)=0\text{ for }|t|\ge 1\,.
 \end{split}
\end{equation*}
The velocity and force terms are given by
\begin{align*}
 P_1^\prime(\lambda,X,Y)(t):=(1-\lambda)\frac{\dot x_0(t)}{\sqrt{1-\dot x_0(t)^2}}+\lambda\frac{\dot a_0}{\sqrt{1-\dot a_0^2}}+\lambda\int_0^t\kappa_aF_1^\prime(X,Y)(s)\text ds \\
 P_2^\prime(\lambda,X,Y)(t):=(1-\lambda)\frac{\dot y_0(t)}{\sqrt{1-\dot y_0(t)^2}}+\lambda\frac{\dot b_0}{\sqrt{1-\dot b_0^2}}+\lambda\int_0^t\kappa_bF_2^\prime(X,Y)(s)\text ds
\end{align*}
and
\begin{align*}
 F_1^\prime(X,Y)(t):=&\frac{1+\dot Y\left(t_2^-(X,Y,t)\right)}{1-\dot Y\left(t_2^-(X,Y,t)\right)}\frac1{\left(X(t)-Y(t_2^-(X,Y,t))\right)^2} \\
 F_2^\prime(X,Y)(t):=&-\frac{1-\dot X\left(t_1^-(X,Y,t)\right)}{1+\dot X\left(t_1^-(X,Y,t)\right)}\frac1{\left(Y(t)-X(t_1^-(X,Y,t))\right)^2} \,.
\end{align*}
Again, $H^\prime(0,\cdot)$ is the zero mapping whereas, if $(x,y)$ is a fixed point of $H^\prime(\lambda,\cdot)$, then
\begin{equation}\label{Fixpunkt-}
\begin{split}
 \frac{\text d}{\text dt}\left(\frac{\dot{X}(t)}{\sqrt{1-\dot{X}(t)^2}}\right)=&\frac{\ddot X(t)}{(1-\dot X(t)^2)^{\frac32}} 
 = (1-\lambda)\frac{\ddot x_0(t)}{(1-\dot x_0(t)^2)^{\frac32}}+\lambda\kappa_aF_1^\prime(X,Y)(t)\,,\\
 \frac{\text d}{\text dt}\left(\frac{\dot{Y}(t)}{\sqrt{1-\dot{Y}(t)^2}}\right)=&\frac{\ddot Y(t)}{(1-\dot Y(t)^2)^{\frac32}} 
 = (1-\lambda)\frac{\ddot y_0(t)}{(1-\dot y_0(t)^2)^{\frac32}}+\lambda\kappa_bF_2^\prime(X,Y)(t)
 \end{split}
\end{equation}
holds -- in particular, if $\lambda=1$, then $(X,Y)$ solve the
equations of motion \eqref{WF} for $\epsilon_+=0, \epsilon_-=1$ and
Cauchy data \eqref{Anfangswerte}.

This time we want to obtain global a priori bounds on possible fixed points of
$H^\prime:[0,1]\times M^\prime\rightarrow C^1(\mathbb R,\mathbb R^2)$ with
\begin{align*}
M^\prime:=\{&(x,y)\in C^1(\mathbb R,\mathbb R^2)\mid x(0)=y(0)=\dot x(0)=\dot y(0)=0,\, \|\dot X\|_\infty,\|\dot Y\|_\infty<1\,,\\
&\forall_{t\in\mathbb R}X(t)> Y(t)\}
\end{align*}
in terms of Newtonian Cauchy data only:
\begin{proposition}
 For any given Newtonian Cauchy data \eqref{Anfangswerte}, there are constants \newline
 $\tilde v^\prime,\tilde V^\prime\in]0,1[$ and $\tilde A^\prime,\tilde D^\prime,\tilde T^\prime>0$ such that, for any fixed point $(x,y)$ of $H^\prime$, 
 \begin{align*}
  &\|\dot X\|_\infty,\|\dot Y\|_\infty<\tilde V^\prime\,\,, \\
  &\inf_{t\in\mathbb R}(X-Y)(t)>\tilde D^\prime\,\,, \\
  &\sup_{t\le-\tilde T^\prime}(\dot X-\dot Y)(t)<-\tilde v^\prime\,\,, \inf_{t\ge \tilde T^\prime}(\dot X-\dot Y)(t)>\tilde v^\prime\,\,, \\
  &|\ddot X(t)|, |\ddot Y(t)|<\frac{\tilde A^\prime}{1+|t|}\,\,.
 \end{align*}
\end{proposition}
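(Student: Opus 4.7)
The strategy is a direct adaptation of the proof of Proposition~\ref{prop:Asymptotik+-}, with two essential differences to account for: (i) only the retarded term is present, so the kinetic/potential comparison no longer produces an exact total derivative but leaves a residual; and (ii) Newtonian Cauchy data at $t=0$ leaves both $t\to+\infty$ and $t\to-\infty$ unconstrained, so every estimate must be run symmetrically in both time directions. The reference trajectories $(x_0,y_0)$ have already been chosen accordingly, satisfying the repulsion conditions $(\dot x_0-\dot y_0)(-1)<0$ and $(\dot x_0-\dot y_0)(1)>0$.

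First I would prove \eqref{Schrankena} (with $\tilde V^\prime$) by an energy-type argument. Multiplying the first equation of \eqref{Fixpunkt-} by $\dot X$ yields, as in \eqref{Energie},
\begin{equation*}
\frac{d}{dt}\!\left(\frac{1}{\sqrt{1-\dot X^2}}\right)
=(1-\lambda)\frac{\dot X\,\ddot x_0}{(1-\dot x_0^2)^{3/2}}
+\lambda\kappa_a\,\dot X\,\frac{1+\dot Y(t_2^-)}{1-\dot Y(t_2^-)}\frac{1}{(X-Y(t_2^-))^2},
\end{equation*}
and the retarded analog of \eqref{Vpunkt} lets me rewrite the last term as
\begin{equation*}
-\frac{d}{dt}\!\left(\frac{\kappa_a}{X-Y(t_2^-)}\right)
+\lambda\kappa_a\,\frac{\dot Y(t_2^-)(1+\dot X)}{1-\dot Y(t_2^-)}\frac{1}{(X-Y(t_2^-))^2}.
\end{equation*}
The residual is not a total derivative, so to close the argument I would adopt the same Lorentz-boost trick as in Proposition~\ref{prop:Asymptotik+-}: writing the same identity for the kinetic quantity $(1-u\dot X)/\sqrt{1-\dot X^2}$ in the frame moving with velocity $u\in(-1,1)$, and choosing $u$ as a parameter to be optimized so that the residual has a favorable sign. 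Integrating from $0$ to $t$ for both $t\ge0$ and $t\le0$, and symmetrically for $Y$, produces a closed inequality that, combined with $X-Y(t_2^\pm)\ge(X-Y)/2$ from Lemma~\ref{lem:t+-} and the Cauchy data values $a_0,b_0,\dot a_0,\dot b_0$, yields $\tilde V^\prime<1$.

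Next, \eqref{Schrankenb} follows from the same energy inequality: the retarded potential terms control $\inf_t(X(t)-Y(t))$ from below in terms of the initial distance $a_0-b_0$ and $\tilde V^\prime$, giving $\tilde D^\prime$ for all $\lambda$ not too small, and for small $\lambda$ the bound follows from the reference trajectories via the estimate $X(t)-Y(t)\ge\inf_s(x_0-y_0)(s)-\frac{4\lambda|t|}{\sqrt{1-\tilde V^{\prime\,2}}}$ (compare \eqref{Abstand2}). For \eqref{Schrankenc}, I would integrate the difference of the two equations \eqref{Fixpunkt-} starting from $t=0$, separately for $t\ge0$ and $t\le0$. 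Since $F_1^\prime>0$ and $F_2^\prime<0$, the integrand has a definite sign once the two charges are sufficiently close, so in both time directions I can show that after a time $\tilde T^\prime$ the relative velocity stays above $\tilde v^\prime$ (respectively below $-\tilde v^\prime$ for $t\le-\tilde T^\prime$) --- again with two regimes of $\lambda$ handled analogously to the FST case. Finally, \eqref{Schrankend} is read off \eqref{Fixpunkt-} directly, using the already established $X(t)-Y(t)\ge D_1+v_3|t|$ for the Coulomb-like denominators and $\tilde V^\prime$ for the $\rho$-factor.

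The main obstacle is the first step: the absence of an exact conservation law in the Synge case means that closing the velocity bound requires the Lorentz-boost optimization to kill the residual term up to a quantity controlled by the retarded-potential decay. In Proposition~\ref{prop:Asymptotik+-} the strip data supplied a bound $\dot Y(t_2^\pm)\le\|\dot b_0\|_\infty$ used in \eqref{Ypunktkleiner0}; here no such strip is available, so I must instead bootstrap, using that any fixed point has $\dot Y(t_2^-(t))$ taking values among the same family of fixed-point velocities being estimated. That the bootstrap closes at a value $\tilde V^\prime<1$ depending only on $a_0,b_0,\dot a_0,\dot b_0$ is precisely why Newtonian Cauchy data suffices for the Synge equations but not (so far) for the full FST equations.
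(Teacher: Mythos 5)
Your outline reproduces the general framework correctly (energy comparison, boosted kinetic energy, separate treatment of the two time directions, the small-$\lambda$/large-$\lambda$ split for the distance and relative-velocity bounds, and reading the acceleration decay off \eqref{Fixpunkt-}), and your algebra for the residual left over after subtracting the total derivative \eqref{Vpunkt} is right. But the step you yourself flag as ``the main obstacle'' --- closing the velocity bound without a prescribed strip --- is precisely where your argument has a genuine gap. ``Choosing $u$ to be optimized so that the residual has a favorable sign'' requires a one-sided bound on $\dot Y(t_2^-)$ relative to $u$ (this is the role of \eqref{Ypunktkleiner0}), i.e.\ exactly the quantity you are trying to estimate; asserting that ``the bootstrap closes at a value $\tilde V'<1$'' is asserting the conclusion, not proving it. Nor can the bootstrap be rescued by a continuation-in-time argument at negative times: there the retarded argument $t_2^-(t)<t$ recedes further into the unconstrained past, so there is no interval on which the needed bound is already known to start the continuation.

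What the paper actually uses, and what is missing from your proposal, is the monotonicity of the velocities of any fixed point: both force terms in \eqref{Fixpunkt-} are sign-definite, so $\ddot X\ge0$ and $\ddot Y\le0$ everywhere. This supplies the one-sided bounds for free, provided the estimates are done in the right order. Step 1 treats $t\le0$: integrating the boosted energy identity from $t$ to $0$ flips the sign of the inequality one needs, so a \emph{lower} bound $\dot Y(t_2^-)\ge\dot b_0$ suffices, and this holds simply because $\dot Y$ is nonincreasing and $t_2^-(t)<t\le0$; boosting by $u=\dot b_0$ then closes the estimate and yields bounds $V_a^-,V_b^-$ on the velocities for negative times. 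Step 2 treats $t>0$: by the same monotonicity, $\sup_{t\in\mathbb R}\dot Y=\lim_{t\to-\infty}\dot Y\le V_b^-$, so the Step-1 bound now plays the role that $\|\dot b_0\|_\infty$ (from the strip) played in Proposition~\ref{prop:Asymptotik+-}, and one boosts by that constant and repeats the FST computation verbatim. This two-step, time-ordered use of monotone velocities is the idea that replaces the strip data and makes Newtonian Cauchy data sufficient; without it your ``optimization over $u$'' is circular. The remaining bounds ($\tilde D'$, $\tilde v'$, $\tilde T'$, $\tilde A'$) you describe are then obtained as in the time-symmetric case, in agreement with the paper.
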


The strategy of proof stems from the following observation: Imitating the proof of Proposition \ref{prop:Asymptotik+-} in
order to get an estimate for $\dot Y$ at times $t>0$ and omitting the advanced
terms appearing there, we would need an upper bound for $\dot Y(t_2^-)$ which
in Proposition \ref{prop:Asymptotik+-} above was provided by the prescribed
trajectory strip. In contrast, we observe
that an upper bound for the now missing term $\dot Y(t_2^+)$ would already be
given by $\dot b_0$ because $\ddot Y$ is always negative. This leads to the idea
of estimating the velocity for negative times first: In this case, by reversing
the corresponding signs in the corresponding calculation in the proof of
Proposition \ref{prop:Asymptotik+-} above, the retarded term takes the
role the advanced term played in the estimate for $t>0$
above, and a lower bound for
$\dot Y(t_2^-)$ at times $t<0$ is needed, which is again given by $\dot b_0$. An
estimate for $\dot X(t)$, $t<0$, is obtained in the same way, and in a second
step, the obtained estimates provide the velocity bounds required in order to
get hands on $\dot X$, $\dot Y$ at future times.

\begin{proof}
We start the calculation by deducing from equation \eqref{Fixpunkt-}
 \begin{equation}\label{Energie-}
 \begin{split}
  &\frac{\text d}{\text dt}\left(\frac{1-\dot b_0\dot X(t)}{\sqrt{1-\dot X(t)^2}}\right)
 = \frac{(\dot X(t)-\dot b_0)\ddot X(t)}{(1-\dot X(t)^2)^{\frac32}} \\
  =& (1-\lambda)\frac{(\dot X(t)-\dot b_0)\ddot x_0(t)}{(1-\dot x_0(t)^2)^{\frac32}}
   + \lambda\kappa_a\left[\frac{(\dot X(t)-\dot b_0)(1+\dot Y(t_2^-))}{1-\dot Y(t_2^-)}\frac1{(X(t)-Y(t_2^-))^2}\right]\,.
  \end{split}
 \end{equation}
{\bf Step 1 ($t\le0$):} As in the proof of Proposition~\ref{prop:Asymptotik+-},
we want to use \eqref{tpunkt} and \eqref{Vpunkt} to estimate the right-hand side
by a total differential. Instead of equation \eqref{Ypunktkleiner0}, we now know, since $\dot Y(t)>\dot b_0$ for $t<0$, that
\begin{equation*}
 \begin{split}
 (\dot X(t)-\dot b_0)(1+\dot Y(t_2^-)) =& (1+\dot b_0)(\dot X(t)-\dot Y(t_2^-)) + (\dot Y(t_2^-)-\dot b_0)(1+\dot X(t)) \\
 \ge& (1+\dot b_0)(\dot X(t)-\dot Y(t_2^-))\,.
 \end{split}
\end{equation*}
Integration of \eqref{Energie-} from $t$ to 0 gives
\begin{align*}
 \frac{1-\dot b_0\dot a_0}{\sqrt{1-\dot a_0^2}}-\frac{1-\dot b_0\dot X(t)}{\sqrt{1-\dot X(t)^2}} \ge& (1-\lambda)\left[\frac{2\dot x_0(t)}{\sqrt{1-\dot x_0(t)^2}} - \frac{2\dot a_0}{\sqrt{1-\dot a_0^2}}\right] \\
 &+ \lambda\kappa_a\left[\frac{1+\dot b_0}{X(t)-Y(t_2^-)}-\frac{1+\dot
 b_0}{a_0-Y(t_2^-(0))}\right],
\end{align*}
and similar estimates as in the time-symmetric case lead to
\begin{equation*}
 \frac{1-|\dot b_0|}{\sqrt{1-\dot X(t)^2}}
 \le \frac4{\sqrt{1-\dot a_0^2}} + \frac{4\kappa_a}{a_0- b_0} - \lambda\kappa_a\frac{1+\dot b_0}{X(t)-Y(t_2^-)}
\end{equation*}
and
\begin{equation*}
 \sup_{t\le0}|X(t)|\le\sqrt{1-\frac{(1-|\dot b_0|)^2}{\left(\frac{4}{\sqrt{1-\dot a_0^2}}+\frac{4\kappa_a}{a_0-b_0}\right)^2}}=:V_a^- \,.
\end{equation*}
The estimate $V_b^-$ for $\sup_{t\le0}|Y(t)|$ is obtained in an analogous way.\newline

\textbf{Step 2 ($t>0$):} Now that we know bounds on the retarded velocities, we
can bound $\dot X(t)$ for $t>0$ by considering $\frac{1-V_a^-\dot
X(t)}{\sqrt{1-\dot X(t)^2}}$ and proceeding as in the proof of
Proposition~\ref{prop:Asymptotik+-} above. Again, the estimate for
$\dot Y$ for positive times is obtained in the same way, so $\tilde V^\prime$
exists as required.\newline

$\tilde v^\prime, \tilde A^\prime, \tilde D^\prime, \tilde T^\prime$ can then be
derived as in the time-symmetric case in the proof of
Proposition~\ref{prop:Asymptotik+-} above.
\end{proof}
\begin{proof}[Proof of Theorem \ref{thm:-}:]
The continuity and compactness proof in Lemma \ref{lem:Stetigkeit} and \ref{lem:Kompaktheit} for $H$ carries over to $H^\prime$ by crossing out the advanced terms, hence the existence of a fixed point of $H^\prime$ can be inferred as in the proof of Theorem \ref{thm:+-}.
\end{proof}

\section{Proof of Theorem \ref{thm:Spielmodell}}

In this section we turn to the FST toy model.
Theorem \ref{thm:Spielmodell} exemplifies that the technique used so far is in
principle also applicable of proving the existence of global solutions to
mixed-type equations, i.e., equations involving advanced as well as retarded
terms, satisfying Cauchy data. In order to treat equations 
\eqref{Spielmodell}, it is sufficient to employ the Banach space
$$B:=\{(x,y)\in C^1(\mathbb R,\mathbb R^2)\mid x(0)=\dot x(0)=y(0)=\dot y(0)=0,\|(x,y)\|<\infty\}$$
with
$$\|(x,y)\|:=\max(\|\dot x\|_\infty, \|\dot y\|_\infty)\,.$$
The second derivatives in the previous proofs came only into play for the sake
of estimating differences of the velocities appearing in the force law.
$H^{\prime\prime}$ is defined as $H^\prime$ in \eqref{Hstrich} ff., where
now
\begin{align*}
 F_1^{\prime\prime}(X,Y)(t):=&\frac1{\left(X(t)-Y(t_2^-(X,Y,t))\right)^2} + \frac1{\left(X(t)-Y(t_2^+(X,Y,t))\right)^2} \\
 F_2^{\prime\prime}(X,Y)(t):=&-\frac1{\left(Y(t)-X(t_1^-(X,Y,t))\right)^2} -\frac1{\left(Y(t)-X(t_1^+(X,Y,t))\right)^2} \,.
\end{align*}
Fixed points again satisfy \eqref{Fixpunkt-}, which for $\lambda=1$
means that they solve the equations of motion of the FST toy model
\eqref{Spielmodell} for the given Cauchy data \eqref{Anfangswerte-spiel}. Now the following a priori estimates are needed:
\begin{proposition}
 For any given Newtonian Cauchy data, there are constants $\tilde v^{\prime\prime},\tilde V^{\prime\prime}\in]0,1[$ and $\tilde D^{\prime\prime},\tilde T^{\prime\prime}>0$ such that, for any fixed point $(x,y)$ of $H^{\prime\prime}$, 
 \begin{align*}
  &\|\dot X\|_\infty,\|\dot Y\|_\infty<\tilde V^{\prime\prime}\,\,, \\
  &\inf_{t\in\mathbb R}(X-Y)(t)>\tilde D^{\prime\prime}\,\,, \\
  &\sup_{t\le-\tilde T^{\prime\prime}}(\dot X-\dot Y)(t)<-\tilde v^{\prime\prime}\,\,, \inf_{t\ge \tilde T^{\prime\prime}}(\dot X-\dot Y)(t)>\tilde v^{\prime\prime}\,\,.
 \end{align*}
\end{proposition}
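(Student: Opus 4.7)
The argument is a direct adaptation of Proposition~\ref{prop:Asymptotik+-} and its Synge analogue in Section~3. The crucial simplification in the toy model is that the forces $F_1^{\prime\prime}, F_2^{\prime\prime}$ lack the velocity factors $\rho, \sigma$ and are strictly sign-definite; combined with $\ddot x_0 \ge 0$ and $\ddot y_0 \le 0$ from the reference trajectories, equations \eqref{Fixpunkt-} force every fixed point to satisfy $\ddot X(t) \ge 0$ and $\ddot Y(t) \le 0$ for every $t \in \mathbb R$. Consequently $\dot X$ is monotonically increasing and $\dot Y$ monotonically decreasing, so that the Newtonian Cauchy velocity $\dot b_0$ furnishes the pointwise bounds $\dot Y(s) \le \dot b_0$ on $s \ge 0$ and $\dot Y(s) \ge \dot b_0$ on $s \le 0$, symmetrically for $\dot X$. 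This plays the role of the a priori bound on $\|\dot b_0\|_\infty$ that was available from the prescribed trajectory strip in Proposition~\ref{prop:Asymptotik+-}.

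I would work with the Lorentz-boosted kinetic energy $W_X^v(t) := (1 - v\dot X(t))/\sqrt{1 - \dot X(t)^2}$, for which $\dot W_X^v = (\dot X - v)\ddot X/(1 - \dot X^2)^{3/2}$. Substituting \eqref{Fixpunkt-} and combining \eqref{tpunkt}, \eqref{Vpunkt} with the algebraic identity
\[
(\dot X - v)(1 \pm \dot Y_\pm) = (1 \pm v)(\dot X - \dot Y_\pm) + (\dot Y_\pm - v)(1 \pm \dot X)
\]
decomposes $\dot W_X^v$ into two total derivatives $(1 \pm v)\frac{d}{dt}(-1/(X - Y_\pm))$ plus two remainders proportional to $(\dot Y_\pm - v)$ whose signs are dictated by the monotonicity of $\dot Y$. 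In Step~1 ($t \le 0$) I take $v = \dot b_0$ and integrate from $t$ to $0$: the retarded remainder is non-negative because $t_2^-(t) \le 0$ forces $\dot Y_- \ge \dot b_0$, while the advanced remainder is non-negative on the subinterval where $t_2^+(s) \le 0$ and is controlled on the complementary subinterval through the change of variables $u = t_2^+(s)$ together with the delay identity $X(s) - Y(t_2^+(s)) = t_2^+(s) - s$ and the lower bound on the transition point $|s^{\ast}| \ge (a_0 - b_0)/2$ provided by Lemma~\ref{lem:t+-} at $s = 0$. This yields a bound $W_X^{\dot b_0}(t) \le C$ uniform in $t \le 0$ and $\lambda \in [0,1]$, hence $\sup_{t \le 0}|\dot X(t)| \le V_X^- < 1$; the symmetric argument produces $\sup_{t \le 0}|\dot Y(t)| \le V_Y^- < 1$. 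In Step~2 ($t \ge 0$), the choice $v := \max(\dot b_0, V_Y^-) < 1$ now dominates $\dot Y(s)$ for every $s \in \mathbb R$, so both remainders carry the favourable sign unconditionally and the same integration produces the uniform velocity bound $\tilde V^{\prime\prime}$.

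The minimum distance $\tilde D^{\prime\prime}$ and the scattering bounds $\tilde v^{\prime\prime}, \tilde T^{\prime\prime}$ then follow essentially as in Proposition~\ref{prop:Asymptotik+-}: integrating the difference of \eqref{Fixpunkt-} for $\dot X$ and $\dot Y$, one estimates $F_1^{\prime\prime}, F_2^{\prime\prime}$ via the now-uniform $\tilde V^{\prime\prime}$ and Lemma~\ref{lem:t+-}, and splits $\lambda$ bounded away from $0$ (where the strictly sign-definite Coulomb repulsion drives quantitative scattering) from $\lambda$ close to $0$ (where comparison to the reference trajectories $(x_0, y_0)$ through the properties \eqref{x0y0} takes over). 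The principal technical obstacle I anticipate, absent in the pure Synge case, is the advanced straddle in Step~1: for $t \le 0$ with $t_2^+(t) > 0$ the monotonicity argument for the sign of $(\dot Y_+ - \dot b_0)$ reverses, and it is the combination of the delay identity $X(s) - Y(t_2^+(s)) = t_2^+(s) - s$ with the uniform positive lower bound on the transition-point distance that enables the remainder to be absorbed into a constant depending only on Newtonian Cauchy data.
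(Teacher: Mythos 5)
Your proposal is correct in substance, but for the decisive step --- the velocity bound at negative times --- it takes a genuinely different route from the paper. Both arguments share the same scaffolding: the sign-definiteness of $F_1^{\prime\prime},F_2^{\prime\prime}$ together with the sign conditions imposed on $\ddot x_0,\ddot y_0$ gives $\ddot X\ge0$, $\ddot Y\le0$ for every fixed point of $H^{\prime\prime}$ via \eqref{Fixpunkt-}, and the bound is extracted from a Lorentz-boosted kinetic energy in two steps (negative times with boost $\dot b_0$, then positive times with the boost supplied by Step~1), exactly as you set it up. The difference lies in how the delayed Coulomb terms are tamed for $t\le0$. The paper does not keep the delayed potentials at all: since $\ddot Y\le0$, the partner trajectory lies below its tangent line $\tilde Y(t)=b_0+\dot b_0t$, so $X(t)-Y(t_2^\pm)\ge\frac12\bigl(X(t)-\tilde Y(t)\bigr)$ by Lemma~\ref{lem:t+-}; after a case distinction on the sign of $\dot a_0-\dot b_0$ (which fixes the sign of $\dot X(t)-\dot b_0$ on the relevant half-line and decides whether negative or positive times are treated first), both the retarded and the advanced force term are bounded by $4\lambda\,\frac{\mathrm d}{\mathrm dt}\bigl(-1/(X(t)-\tilde Y(t))\bigr)$, an exact, delay-free total derivative, so no ``straddle'' analysis ever appears. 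You instead keep the actual delayed potentials, reuse the \eqref{tpunkt}/\eqref{Vpunkt} decomposition of Proposition~\ref{prop:Asymptotik+-} with remainders proportional to $\dot Y(t_2^\pm)-\dot b_0$, and must then control the advanced remainder on the interval where $t_2^+(s)>0$ while $s\le0$. Your fix is workable, but note where the work sits: the raw remainder carries the factor $(1+\dot X(s))/(1+\dot Y(t_2^+(s)))$, which is not yet controlled at this stage, and it is precisely the Jacobian of your substitution $u=t_2^+(s)$, namely $\dot t_2^+=(1+\dot X)/(1+\dot Y(t_2^+))$ from \eqref{tpunkt}, that cancels it; moreover the transformed range $[0,t_2^+(0)]$ is not a priori bounded in terms of the Cauchy data, so one needs both $X(s(u))-Y(u)=u-s(u)\ge u$ for integrability at large $u$ and the floor $u-s(u)\ge\frac{a_0-b_0}{2}$ near $u=0$, which follows from $|\dot X|,|\dot Y|<1$ alone --- a small bonus of your route being that no case distinction on $\dot a_0-\dot b_0$ is then needed. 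With these details supplied, Step~1 closes, and Step~2 as well as the derivation of $\tilde D^{\prime\prime},\tilde v^{\prime\prime},\tilde T^{\prime\prime}$ proceed as you say, parallel to Proposition~\ref{prop:Asymptotik+-}. In short: the paper buys simplicity through the straight-line comparison trajectory at the price of a case split and a factor of $4$; your version stays closer to the machinery of Proposition~\ref{prop:Asymptotik+-}, avoids the case split, but pays with the change-of-variables control of the advanced straddle.
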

\begin{proof}
We first assume that $\dot a_0-\dot b_0\le0$ (and, consequently, $\dot X(t)-\dot b_0\le0$ for $t<0$) and consider $t\le0$. Even though \eqref{Spielmodell} is not Lorentz invariant, we again compute the ``kinetic energy'' in the Lorentz frame boosted by $b_0$:
\begin{equation}\label{EnergieS}
\begin{split}
 &\frac{\text d}{\text dt}\left(\frac{1-\dot b_0\dot X(t)}{\sqrt{1-\dot X(t)^2}}\right)
 =\frac{(\dot X(t)-\dot b_0)\ddot X(t)}{(1-\dot X(t)^2)^{\frac32}} \\
 =& (1-\lambda)\frac{(\dot X(t)-\dot b_0)\ddot x_0(t)}{(1-\dot x_0(t)^2)^{\frac32}}
 +\lambda \frac{\dot X(t)-\dot b_0}{\left(X(t)-Y(t_2^-)\right)^2} 
 + \lambda\frac{\dot X(t)-\dot b_0}{\left(X(t)-Y(t_2^+)\right)^2}\,.
 \end{split}
\end{equation}
Since the acceleration of $Y$ is never positive, the closest possible route to
$X$ that $Y$ can take is $\tilde Y(t):=b_0+\dot b_0t$, and, using Lemma
\ref{lem:t+-},
$$X(t)-Y(t_2^\pm)\ge X(t)-\tilde Y(\tilde t_2^\pm)\ge \frac12(X(t)-\tilde Y(t))$$
holds.
Equation \eqref{EnergieS} then reduces to
\begin{align*}
 \frac{\text d}{\text dt}\left(\frac{1-\dot b_0\dot X(t)}{\sqrt{1-\dot X(t)^2}}\right)
 \ge& \frac{-2\ddot x_0(t)}{(1-\dot x_0(t)^2)^{\frac32}}
 +4\lambda \frac{\dot X(t)-\dot b_0}{\left(X(t)-b_0-\dot b_0t)\right)^2} \,;
\end{align*}
by integration from $t<0$ to 0 one arrives at
\begin{align*}
 \frac{1-\dot b_0\dot X(t)}{\sqrt{1-\dot X(t)^2}}
 \le& \frac{1-\dot b_0\dot a_0}{\sqrt{1-\dot a_0^2}} +\frac{2\dot a_0}{\sqrt{1-\dot a_0^2}} - \frac{2\dot x_0(t)}{\sqrt{1-\dot x_0(t)^2}}+\frac{4\lambda}{a_0-b_0}-\frac{4\lambda}{X(t)-b_0-\dot b_0t} \\
 \le&\frac4{\sqrt{1-\dot a_0^2}}+\frac4{a_0-b_0} \,.
\end{align*}
From this, one finds the bound
\begin{equation*}
 \sup_{t\le0}|\dot X(t)|\le\sqrt{1-\frac{(1-|\dot b_0|)^2}{\left(\frac4{\sqrt{1-\dot a_0^2}}+\frac4{a_0-b_0}\right)^2}}=:V_a^-\,.
\end{equation*}
The estimate $V_b^-$ for $|\dot Y(t)|$ at negative times is inferred in the same way. For positive times, we look at
\begin{align*}
 &\frac{\text d}{\text dt}\left(\frac{1-V_b^-\dot X(t)}{\sqrt{1-\dot X(t)^2}}\right)
 =\frac{(\dot X(t)-V_b^-)\ddot X(t)}{(1-\dot X(t)^2)^{\frac32}} \\
 \le& (1-\lambda)\frac{(\dot X(t)-V_b^-)\ddot x_0(t)}{(1-\dot x_0(t)^2)^{\frac32}}
 +\lambda \frac{\dot X(t)-\dot Y(t_2^-)}{\left(X(t)-Y(t_2^-)\right)^2} + \lambda\frac{\dot X(t)-\dot Y(t_2^+)}{\left(X(t)-Y(t_2^+)\right)^2}
\end{align*}
and use
$$\dot X(t)-\dot Y(t_2^\pm)\le2\frac{\dot X(t)-\dot Y(t_2^\pm)}{1\pm\dot Y(t_2^\pm)}\,.$$
We can proceed with the help of equation~\eqref{Vpunkt} as in the proof of
Proposition~\ref{prop:Asymptotik+-}. The same procedure can be applied to get
the corresponding bound for $\dot Y$. If, contrary to our assumption, $\dot
a_0>\dot b_0$, we have to reverse the order of our proof, i.e., do the first
part for positive and the second part for negative times.\newline

The remaining inequalities are then proved analogous to the arguments in Proposition~\ref{prop:Asymptotik+-}.
\end{proof}
\begin{proof}[Proof of Theorem~\ref{thm:Spielmodell}:]
The proof that $H^{\prime\prime}$ is continuous and compact is similar to the
one of Lemmata~\ref{lem:Stetigkeit} and \ref{lem:Kompaktheit}, with considerable
simplifications due to the absence of the velocity terms $\sigma$ and $\rho$ introduced in
\eqref{eq:vel-factors}. Again, the existence of a
fixed point of $H^{\prime\prime}$ can be inferred as in the proof of Theorem
\ref{thm:+-}.
\end{proof}

\section{Proof of Theorem \ref{thm:Rekonstruktion}}

In this last section we prove that only finites strips of the solutions are
needed to identify them uniquely.
\begin{figure}
    \includegraphics[trim=3.6cm 8.6cm 3.6cm 2.7cm, clip, width=0.8\textwidth]{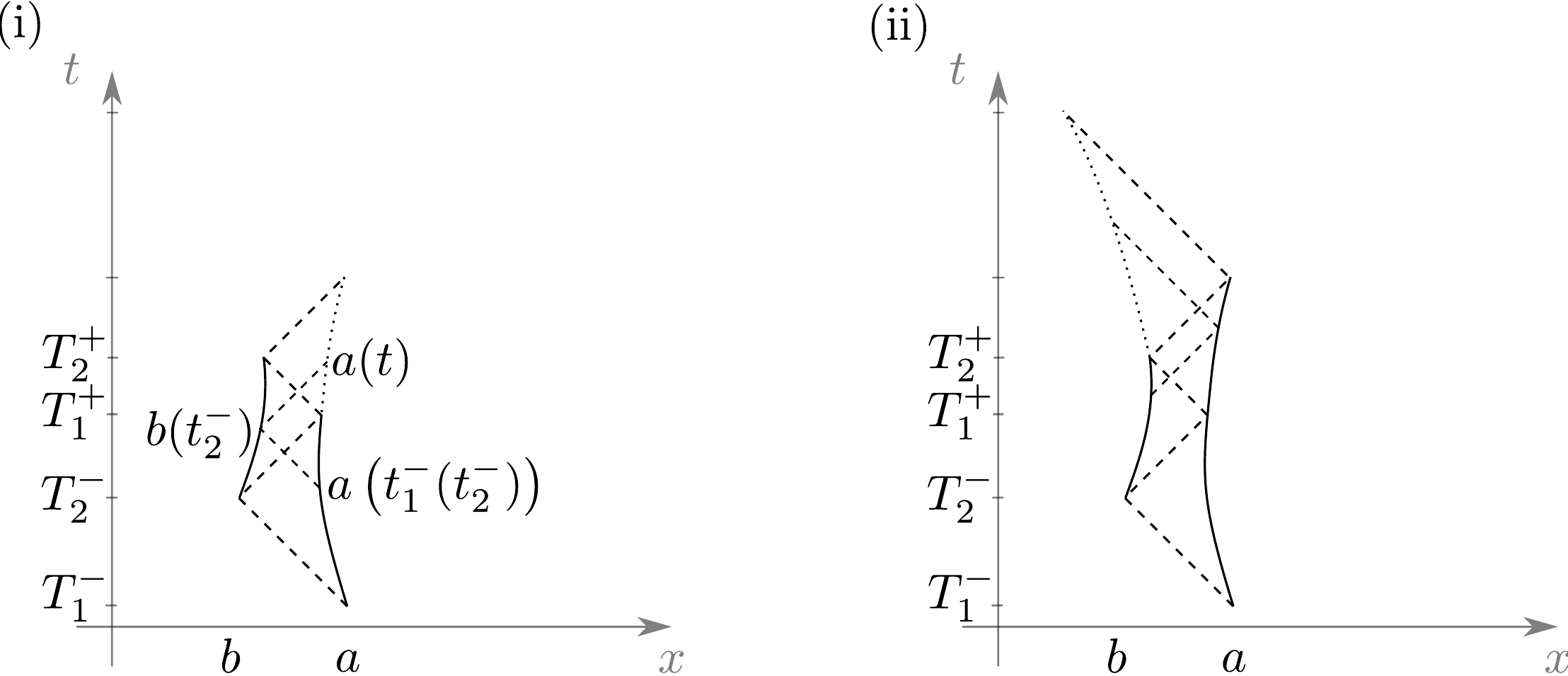}
    \caption[...]{\label{fig:Rekonstruktion}Reconstruction of solutions from the segments $\left.a\right|_{[T_1^-,T_1^+]}$ and $\left.b\right|_{[T_2^-,T_2^+]}$.
     (i) Step 1: Reconstruction of $\left.a\right|_{[T_1^+,t_1^+(T_2^+)]}$ (the
     dotted part), e.g., $(t,a(t))$ from $b(t_2^-)$ and $a(t_1^-(t_2^-))$ and
     their derivatives;
     (ii) Step 2: Reconstruction of the dotted part
     $\left.b\right|_{[T_2^+,t_2^+(t_1^+(T_2^+))]}$.}
\end{figure}
\begin{proof}[Proof of Theorem~\ref{thm:Rekonstruktion}:]
The idea is to solve equation~\eqref{WF} for $b$ for the advanced term in order
to compute $a$ up to time $t_1^+(T_2^+)$ and to iterate this procedure as
indicated in Figure~\ref{fig:Rekonstruktion}.
To reconstruct $a(t)$ for $t\in[T_1^+, t_1^+(T_2^+)]$, we
rewrite equation~\eqref{WF} for $\dot b$ as
\begin{equation*}
 \dot a(t_1^+)=f\left(-2\left(b(t)-a(t_1^+)\right)^2\left[\frac{\ddot b(t)}{\kappa_b(1-\dot b(t)^2)^{\frac32}}
 +\frac12\frac{1-\dot a(t_1^-)}{1+\dot a(t_1^-)}\frac1{(b(t)-a(t_1^-))^2}\right]\right)
\end{equation*}
or
\begin{equation*}\label{Fortsetzung}
 \dot a(t)=f\left(-2\left(b(t_2^-)-a(t)\right)^2\left[\frac{\ddot b(t_2^-)}{\kappa_b(1-\dot b(t_2^-)^2)^{\frac32}}
 +\frac12\frac{1-\dot a(t_1^-(t_2^-))}{1+\dot a(t_1^-(t_2^-))}\frac1{(b(t_2^-)-a(t_1^-(t_2^-)))^2}\right]\right)
\end{equation*}
with $f(u)=\frac{u-1}{u+1}$.
Trajectory $b$ at the retarded times as well as $a$ at the double
retarded times being the given input for the reconstruction, this equation can
be interpreted as an ordinary differential equation
\begin{equation}\label{GDG}
\dot a(t)=f\left(g(t,a(t))\right)
\end{equation}
for $a$ with given initial value $a(T_1^+)$, where
\begin{align*}
g(t,x):=-2\left(b(t_2^-)-x\right)^2\left[\frac{\ddot b(t_2^-)}{\kappa_b(1-\dot b(t_2^-)^2)^{\frac32}}
 +\frac12\frac{1-\dot a(t_1^-(t_2^-))}{1+\dot a(t_1^-(t_2^-))}\frac1{(b(t_2^-)-a(t_1^-(t_2^-)))^2}\right]
 \end{align*}
 and each term $t_2^-$ also depends on $t$ and $x$ according to definition~\eqref{t+-}.
The map $g$ is well-defined for all $(t,x)$ such
that
$t_2^-$ exists, i.e., on
$$D_g:=\left\{(t,x)\mid\exists s\in[T_2^-,T_2^+]:t-s=|x-b(s)|\right\}\,,$$
the set of all space-time points which can be reached from the initial
trajectory segment with speed of light. The velocity field $f\circ g$ is
thus well-defined on
$$D_{f\circ g}:=\{(t,x)\in D_g\mid g(t,x)\neq-1\}\,.$$ 
By definition of the initial segments, we have 
$T_1^+-T_2^-=a(T_1^+)-b(T_2^-)$, which implies that $(T_1^+,a(T_1^+))$ is
contained in $D_{g}$. That this point also lies in $D_{f\circ g}$ follows from the fact
that the equation of motion \eqref{WF} for $b$ is satisfied at time $T_2^-$.
The strips $\left.a\right|_{[T_1^-,T_1^+]}$ and $\left.b\right|_{[T_2^-,T_2^+]}$ are $C^\infty$ because the equations of motion have one more derivative on the left-hand side than on the right.
 Therefore, $f\circ g$ is differentiable w.r.t. $x$, in particular locally Lipschitz continuous, and \eqref{GDG} has a unique solution with a certain maximal lifetime.
 Since \eqref{GDG} is just the reordered equation of motion, $a$ itself solves
 it as long as $(t,a(t))$ remains in $D_{f\circ g}$. As shown in Proposition~\ref{prop:Asymptotik+-},
 we have
 \begin{align}
     \label{eq:global-bounds}
     |\dot a|\leq C<1
     \qquad
     \text{and}
     \qquad
    \inf_{t\in\mathbb R}|a(t)-b(t)|=:D>0.
 \end{align}
 Therefore, such a maximal lifetime $T_{\max}$ exists and
 $T_{\max}-T_2^+=|a(T_{\max})-b(T_2^+)|\ge D$, i.e.,
 $T_{\max}=t_1^+(a,b,T_2^+)$. 
 Consequently,
 $\left.a\right|_{[T_1^+,t_1^+(a,b,T_2^+)]}$ is the unique maximal solution to
 \eqref{GDG}.  In an analogous way,
 $\left.b\right|_{[T_2^+,t_2^+(t_1^+(a,b,T_2^+))]}$ can be uniquely
 reconstructed and, iteratively, the whole solution in the future because 
 the bounds \eqref{eq:global-bounds} are uniform in time. The
 reconstruction of the solution in the past is performed in the
 same way, solving \eqref{WF} for
 the retarded instead of advanced terms.
\end{proof}
\begin{remark}
Note that, if this construction is performed with arbitrarily prescribed
(smooth) trajectory strips (not a priori gained from a global solution), it can
end after a finite lifetime. More precisely, this is always the case if the
prescribed trajectories or the ones obtained after finite iterations exhibit
accelerations that require an attractive or zero force between the particles,
reach the speed of light in finite time or approach a light line as an
asymptote. Moreover, the obtained trajectories need not be differentiable at
times $T_1^+$, $T_2^+$, $t_1^+(T_2^+)$, etc. Apart from very particular cases, it
is unknown which reasonable additional conditions on the trajectory strips
ensure global existence of this construction.
The corresponding uniqueness assertion for the toy model
$\eqref{Spielmodell}$ in three spacial dimensions
can be found in \cite{DirkNicola}. There, it is also discussed how the initial
strips can be restricted in order to increase the regularity at times $T_1^+$,
$T_2^+$, $t_1^+(T_2^+)$, etc. 
\end{remark}

\vskip1cm

\paragraph{Acknowledgment} The authors thank G.\ Bauer and D. Dürr for valuable
discussions on FST electrodynamics and express their gratitude for
the hospitality of G.\ Bauer's 
group at FH Münster. Furthermore, D.-A.D. thanks H.-O. Walther and R. Nussbaum for
discussions on delay equations. This work was partially funded by the Elite Network of
Bavaria through the Junior Research Group ``Interaction between Light and
Matter''.

\vskip1cm

\bibliographystyle{plain}

\vskip1cm

\end{document}